\newtheorem{thm}{Theorem}%
\newtheorem{lem}{Lemma}%
\theoremstyle{definition}
\theoremstyle{remark}
\theoremstyle{plain}
\def\CC{{\mathbb C}}
\def\EE{{\mathbb E}}
\def\HH{{\mathbb H}}
\def\NN{{\mathbb N}}
\def\QQ{{\mathbb Q}}
\def\RR{{\mathbb R}}
\def\TT{{\mathbb T}}
\def\ZZ{{\mathbb Z}}
\def\vecm{{\text{\boldmath$m$}}}
\def\vecxi{{\text{\boldmath$\xi$}}}
\def\scrN{{\mathcal N}}
\def\scrP{{\mathcal P}}
\def\scrS{{\mathcal S}}
\def\Im{\operatorname{Im}}
\def\e{\mathrm{e}}
\def\i{\mathrm{i}}
\def\C{\operatorname{C{}}}
\def\L{\operatorname{L{}}}
\def\SL{\operatorname{SL}}
\def\meas{\operatorname{meas}}
\def\GamG{\Gamma\backslash G}
\def\GamGG{\widehat\Gamma\backslash\widehat G}
\def\SLZ{\SL(2,\ZZ)}
\def\SLR{\SL(2,\RR)}
\def\trans{\,^\mathrm{t}\!}
\title{Limit theorems for skew translations}
\author{Jory Griffin}
\author{Jens Marklof}
\address{Jory Griffin, School of Mathematics, University of Bristol,
Bristol BS8 1TW, U.K.\newline
\rule[0ex]{0ex}{0ex} \hspace{8pt}{\tt jg9183@bristol.ac.uk}}
\address{Jens Marklof, School of Mathematics, University of Bristol,
Bristol BS8 1TW, U.K.\newline
\rule[0ex]{0ex}{0ex} \hspace{8pt}{\tt j.marklof@bristol.ac.uk}}
\date{17 September 2013}
\thanks{The research leading to these results has received funding from the European Research Council under the European Union's Seventh Framework Programme (FP/2007-2013) / ERC Grant Agreement n. 291147. J.M. is furthermore supported by a Royal Society Wolfson Research Merit Award}
\subjclass[2010]{37A50,11F27,60F05,37D40,28D05}
\begin{document}

\begin{abstract}
Bufetov, Bufetov-Forni and Bufetov-Solomyak have recently proved limit theorems for translation flows, horocycle flows and tiling flows, respectively. We present here analogous results for skew translations of a torus.

\end{abstract}

\maketitle

\newcounter{sectionno}\setcounter{sectionno}{0}

\refstepcounter{sectionno}\S \arabic{sectionno}. 
\label{intro}
Bufetov \cite{Bufetov13} has recently established limit theorems for translation flows on flat surfaces and, in joint work with Forni \cite{Bufetov12}, for horocycle flows on compact hyperbolic surfaces. Bufetov and Solomyak \cite{Bufetov13b} have proved analogous theorems for tiling flows. The striking feature of these results is that the central limit theorem (a common feature of many ``chaotic'' dynamical systems) fails. The limit laws are instead characterised in terms of the corresponding renormalisation dynamics. The purpose of the present note is to point out that an analogous result holds in the simpler case of skew translations of the torus. Our approach uses the modular invariance of theta sums as in \cite{Marklof99,MarklofIMRN,Marklof03}. The limit theorems proved in these papers may be interpreted as limit theorems for random, rather than fixed, skew translations, cf.~also \cite{Jurkat81,Jurkat82,Jurkat83} and \cite{Cellarosi11}. The approach by Flaminio and Forni \cite{Flaminio} developed for nilflows may yield an alternative route to our main result, but we have not explored this further. Another class of systems which exhibit non-normal limit laws are random translations of the torus; we refer the reader to Kesten's Cauchy limit theorem for circle rotations \cite{Kesten1,Kesten2} and the recent higher-dimensional generalisations by Dolgopyat and Fayad \cite{DF1,DF2}; see also Sinai and Ulcigrai \cite{Sinai} for limit theorems for circle rotations with non-integrable test functions.

\vspace{10pt}\refstepcounter{sectionno}\S \arabic{sectionno}.
Given $\alpha\in\RR$, a skew translation of the torus $\TT^2=\RR^2/\ZZ^2$ is defined by
\begin{equation}
\Lambda_\alpha:\TT^2 \rightarrow \TT^2 ,\qquad 
(p,q) \mapsto (p+\alpha, q+p).
\end{equation}
The $n$th iterate of this map is 
\begin{equation}
\Lambda_\alpha^n(p,q)=\bigg(p+n\alpha,q+np+ \frac{n(n-1)}{2}\,\alpha\bigg) .
\end{equation}
For $\alpha\notin\QQ$, the map is uniquely ergodic with respect to Lebesgue measure on $\TT^2$, and so for every $\psi\in\C(\TT^2,\CC)$ (the space of continuous functions $\TT^2\to\CC$) and $(p,q)\in\TT^2$
\begin{equation}\label{UE}
\lim_{N\to\infty} \frac{S_{N,\alpha} [\psi](p,q)}{N} = \int_{\TT^2} \psi(x) \, dx,
\end{equation}
with the Birkhoff sum
\begin{equation}
S_{N,\alpha} [\psi] = \sum_{n=1}^N \psi\circ \Lambda_\alpha^n .
\end{equation}
The rate of convergence in \eqref{UE} depends on how well $\alpha$ is approximable by rationals, see \cite{Flaminio} and \cite{Klopp} for the most recent results in this direction. By replacing $\psi$ with $\psi-\int \psi$, we may assume without loss of generality that $\psi$ has mean zero, so that the right hand side of \eqref{UE} vanishes. To keep the presentation simple, we assume throughout that $\psi$ is in fact a simple harmonic,
\begin{equation}
\psi(p,q)=\psi_{k,\ell}(p,q)=e(k p+\ell q), \qquad e(x):=\e^{2\pi\i x},
\end{equation}
for $k,\ell\in\ZZ$. The case $\ell=0$ leads to a geometric sum and is elementary. We therefore assume in the following that $\ell\neq 0$. 

We are interested in the distribution of the random variable
\begin{equation} \label{XNa}
X_{N,\alpha} = \frac{S_{N,\alpha} [\psi](p,q)}{\sqrt N}
\end{equation}
for large $N$, where $(p,q)$ is uniformly distributed in $\TT^2$ with respect to Lebesgue measure.  
The normalisation of the Birkhoff sum by $\sqrt N$ ensures that the variance of $X_{N,\alpha}$ is unity,
\begin{equation}
\EE |X_{N,\alpha}|^2 = 1.
\end{equation}
We also note that the distribution of $X_{N,\alpha}$ in $\CC$ is rotation-invariant.

Generalizing our results to arbitrary trigonometric polynomials $\psi$ is possible following the discussion in \cite{MarklofIMRN}, but will involve replacing $\SLR$ by its metaplectic cover, $\RR^2$ by the Heisenberg group and the modular group $\SLZ$ by certain congruence subgroups. The extension to more general functions can presumably be achieved by an analogue of the approximation argument used in \cite{MarklofIMRN}.

\vspace{10pt}\refstepcounter{sectionno}\S \arabic{sectionno}.
To state our main result, let $G=\SLR$, $\Gamma=\SLZ$ and
\begin{equation}
n(u)=\begin{pmatrix} 1 & u \\ 0 & 1\end{pmatrix},\qquad
\Phi^t = \begin{pmatrix} \e^{-t/2} & 0 \\ 0 & \e^{t/2} \end{pmatrix} .
\end{equation}
We recall that $\GamG$ can be identified with the unit tangent bundle of the modular surface and the action of $\{\Phi^t\}_{t\in\RR}$ and $\{n(u)\}_{u\in\RR}$ by right multiplication on $\GamG$ yields the geodesic and horocycle flow on the unit tangent bundle of the modular surface.

\begin{thm}\label{thm1}
Fix $\psi=\psi_{k,\ell}$ as above. Then, for any bounded $F\in\C(\CC,\RR)$ there exists $\nu[F]\in\C(\GamG,\RR)$ so that for any $\alpha\in\RR$, $N=\e^{t/2}\in\NN$, $k\in\ZZ$ and $\ell\in\ZZ\setminus\{0\}$,
\begin{equation}
\EE F(X_{N,\alpha}) = \nu[F](n(\ell\alpha)\Phi^t) .
\end{equation}
\end{thm}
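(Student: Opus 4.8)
The plan is to compute the Birkhoff sum explicitly, recognise it as a normalised incomplete theta sum, and then realise the expectation over $(p,q)$ as the value at $n(\ell\alpha)\Phi^t$ of an automorphic functional built from the metaplectic representation. First I would expand $\psi\circ\Lambda_\alpha^n$ using the given formula for $\Lambda_\alpha^n$. A direct computation gives
\begin{equation}
X_{N,\alpha}=\frac{e(kp+\ell q)}{\sqrt N}\sum_{n=1}^N e\!\left(\tfrac12\,\ell\alpha\,n^2+y\,n\right),
\qquad y=k\alpha+\ell p-\tfrac12\ell\alpha ,
\end{equation}
so that, apart from a unit-modulus prefactor, $X_{N,\alpha}$ is the normalised incomplete theta sum with quadratic coefficient $\ell\alpha$ and linear shift $y$. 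This already isolates the role of $\ell\alpha$ predicted by the statement.

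Next I would reduce the average over $(p,q)$. Writing $\phi=kp+\ell q \bmod 1$, the variables $(\phi,y)$ are, since $\ell\neq0$, independent and uniformly distributed on $\TT^2$: for fixed $p$ the phase $\phi$ is uniform through $q$, while $y$ depends on $p$ alone and is uniform mod $1$ because its slope in $p$ equals $\ell\neq0$. As the inner sum has period $1$ in $y$, this yields the exact identity
\begin{equation}
\EE F(X_{N,\alpha})=\int_0^1\!\!\int_0^1 F\!\left(e(\phi)\,\frac{1}{\sqrt N}\sum_{n=1}^N e\!\left(\tfrac12\,\ell\alpha\,n^2+y\,n\right)\right)d\phi\,dy ,
\end{equation}
confirming the rotation invariance noted earlier and showing the left side depends on $\alpha$ only through $\ell\alpha$ and on $N$.

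The heart of the argument is to express the inner sum as the value of an automorphic function. Using the Schr\"odinger--Weil (metaplectic) representation $R$ of $\SLR$ on $L^2(\RR)$, in which $R(n(x))$ multiplies by the chirp $e(\tfrac12 x w^2)$ and $R(\Phi^t)$ sends $f(w)$ to $e^{-t/4}f(e^{-t/2}w)$, I would take the cutoff $f=\mathbf 1_{(0,1]}$ and define
\begin{equation}
\Theta_f(g;y)=\sum_{n\in\ZZ}\bigl(R(g)f\bigr)(n)\,e(ny),\qquad g\in\SLR .
\end{equation}
With $N=e^{t/2}$ one checks directly that $\Theta_f(n(\ell\alpha)\Phi^t;y)$ equals precisely the normalised incomplete theta sum above, so that setting $\nu[F](g):=\int_0^1\!\int_0^1 F\bigl(e(\phi)\,\Theta_f(g;y)\bigr)\,d\phi\,dy$ gives $\EE F(X_{N,\alpha})=\nu[F](n(\ell\alpha)\Phi^t)$. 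It then remains to show that $\nu[F]$ descends to a continuous function on $\GamG$. For the descent I would invoke Jacobi's theta transformation, i.e.\ Poisson summation, which yields the automorphy $\Theta_f(\gamma g;y)=\epsilon(\gamma,g)\,\Theta_f(g;y_\gamma)$ for $\gamma\in\Gamma$, with a unit-modulus metaplectic multiplier $\epsilon$ and an affine reparametrisation $y\mapsto y_\gamma$ preserving Lebesgue measure on $\TT$; since $\epsilon$ is absorbed into the rotation variable $\phi$ and the $y$-average is invariant under $y\mapsto y_\gamma$, the integral $\nu[F](g)$ is genuinely left $\Gamma$-invariant and hence a function on $\GamG$.

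The main obstacle I anticipate is twofold and concentrated in this last step. First, one must verify that the metaplectic multiplier $\epsilon(\gamma,g)$ is exactly cancelled by the $\phi$-average for the \emph{full} modular group $\Gamma=\SLZ$; this is the precise mechanism that allows the result to live on $\GamG$ rather than on the metaplectic cover, which is what a general trigonometric polynomial $\psi$ would require. Second, because the cutoff $f=\mathbf 1_{(0,1]}$ is not smooth, $\Theta_f(\,\cdot\,;y)$ is not pointwise continuous in $g$, so continuity of $\nu[F]$ cannot be read off directly; it must instead be extracted from the $(\phi,y)$-average, using boundedness of $F$ together with dominated convergence and the fact that $g\mapsto\Theta_f(g;y)$ is continuous for almost every $y$. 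Establishing these two points rigorously, rather than the elementary identities of the first two paragraphs, is where the work lies.
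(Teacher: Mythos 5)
Your reduction of $\EE F(X_{N,\alpha})$ to an average of $F$ of the normalised incomplete theta sum over the rotation variable $\phi$ and the linear-phase variable $y$ is correct and matches the first half of the paper's Lemma \ref{lem1}. The genuine gap is in the descent step. Your theta function $\Theta_f(g;y)=\sum_n (R(g)f)(n)e(ny)$ carries only \emph{one} shift characteristic (the linear phase), and the transformation law you assert, $\Theta_f(\gamma g;y)=\epsilon(\gamma,g)\,\Theta_f(g;y_\gamma)$ with $y_\gamma$ an affine reparametrisation of the single variable $y$, is false: already under $\tau\mapsto-1/\tau$ Poisson summation converts the linear-phase characteristic into an argument-shift characteristic, $\sum_n e(\tfrac12 n^2\tau+ny)\rightsquigarrow(-\i\tau)^{-1/2}\sum_n e(-\tfrac{1}{2\tau}(n+y)^2)$, so the one-parameter family is not closed under $\SLZ$. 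Consequently your $\nu[F](g)=\int\!\!\int F(e(\phi)\Theta_f(g;y))\,d\phi\,dy$ is an average over a single closed line in the two-dimensional characteristic torus, and under $\gamma$ that line is carried to a \emph{different} line $\{(ay,cy)\}+\text{shift}$; the two line-averages of $F(|\Theta|)$ need not agree (they do for $F(z)=|z|^2$ by Parseval, but not in general), so your $\nu[F]$ does not descend to $\GamG$. The paper repairs exactly this by working with $\Theta_f(\tau,\phi;\vecxi)$, $\vecxi\in\TT^2$, whose modulus is invariant under the full semidirect product $\widehat\Gamma\subset\SLR\ltimes\RR^2$, and by observing (Lemma \ref{lem1}) that at $\phi=0$ the extra argument-shift average costs nothing: completing the square shows the distribution over $x$ is the same for every fixed $y$, so one may average over all of $\TT^2$. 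This second averaging variable is not a convenience but the mechanism that makes the functional $\Gamma$-invariant, and it is absent from your construction.

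A secondary, lesser issue is your treatment of continuity for the cutoff $f=\chi$. Deducing continuity of $\nu_\chi[F]$ from ``$g\mapsto\Theta_\chi(g;y)$ is continuous for almost every $y$'' plus dominated convergence is not substantiated: for $\phi\neq 0\bmod\pi$ the coefficients $\chi_\phi(w)$ decay only like $w^{-1}$, the defining series does not converge absolutely, and pointwise continuity in $g$ for fixed characteristics is exactly what fails to be accessible. The paper instead proves continuity of the \emph{averaged} quantity by an $L^2$ argument: $\int_{\TT^2}|\Theta_f(M;\vecxi)|^2d\vecxi=\|f\|_2^2$, a Lipschitz estimate for compactly supported smooth $F$, a three-part decomposition $F=F_0+F_1+F_2$ with a Chebyshev bound on the tail, and approximation of $\chi$ by Schwartz functions in $L^2$, yielding a bound uniform in $M$. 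You correctly identify where the difficulty lies, but the route you sketch would need to be replaced by something of this kind.
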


For every $M\in\GamG$, the linear functional $\nu_M: F\mapsto \nu[F](M)$ defines a probability measure on $\CC$. We will see that 
\begin{equation}\label{defP}
\scrP=\{ \nu_M : M\in\GamG \} \cup \{\delta_0 \} 
\end{equation}
is the closure of the space of these probability measures (with respect to the weak topology), where $\delta_0$ is the delta mass at zero. The sequence of probability measures $\rho_{N,\alpha}$ describing the distribution of the $X_{N,\alpha}$ is therefore relatively compact, i.e., every sequence of $\rho_{N,\alpha}$ contains a converging subsequence. The following theorem, which is a corollary of Theorem \ref{thm1} and standard distribution properties of the geodesic flow on $\GamG$, provides a complete classification of all weak limits of subsequences in $(\rho_{N,\alpha})_N$ with $\alpha\in\RR$. We denote weak convergence by $\xrightarrow{w}$.

\begin{thm} \label{thm2}
\begin{enumerate}
\item[(i)] Fix $\alpha\in\RR$. Then, for every convergent subsequence $(\rho_{N_i,\alpha})_i$ there exists $\nu\in\scrP$ such that $\rho_{N_i,\alpha}\xrightarrow{w} \nu$.
\item[(ii)] There is a null set $\scrN\subset\RR$ such that for every $\alpha\in\RR\setminus\scrN$ the following holds. For every $\nu\in\scrP$ there exists a convergent subsequence $(\rho_{N_i,\alpha})_i$ such that $\rho_{N_i,\alpha}\xrightarrow{w} \nu$.
\end{enumerate}
\end{thm}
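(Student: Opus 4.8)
The plan is to deduce both parts from Theorem~\ref{thm1}, which identifies the distribution $\rho_{N,\alpha}$ of $X_{N,\alpha}$ with the measure $\nu_{M_N}$ attached to the point $M_N=n(\ell\alpha)\Phi^{t}\in\GamG$, where $t=2\log N$. Indeed, for every bounded $F\in\C(\CC,\RR)$ the theorem gives $\int_\CC F\,d\rho_{N,\alpha}=\EE F(X_{N,\alpha})=\nu[F](M_N)=\int_\CC F\,d\nu_{M_N}$, so $\rho_{N,\alpha}=\nu_{M_N}$ lies in $\{\nu_M:M\in\GamG\}$ for every $N$. Part (i) is then immediate: since $\EE|X_{N,\alpha}|^2=1$, Chebyshev's inequality makes $(\rho_{N,\alpha})_N$ tight, and any weak limit of a subsequence is a weak limit point of a sequence contained in $\{\nu_M:M\in\GamG\}$, hence lies in its closure $\scrP$.

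For part (ii) the main input is the classical equidistribution statement that for Lebesgue-almost every $u\in\RR$ the forward geodesic ray $\{n(u)\Phi^t:t\ge0\}$ equidistributes in $\GamG$ with respect to the normalised Haar measure $\mu$. I would establish this by the standard ergodicity-and-foliation argument: the geodesic flow $g\mapsto g\Phi^t$ is ergodic, so by Birkhoff the set $\scrE$ of points with equidistributed forward orbit has full $\mu$-measure; $\scrE$ is invariant under $\Phi^t$ and, because orbits sharing a stable horocycle $\bar n(s)=\begin{pmatrix}1&0\\s&1\end{pmatrix}$ are forward asymptotic, it is saturated by the stable horocycle. Writing $\mu$ in the local product coordinates $(u,s,\tau)\mapsto n(u)\bar n(s)\Phi^\tau$ and applying Fubini then forces $n(u)\in\scrE$ for almost every $u$. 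Since $\ell\neq0$, the substitution $u=\ell\alpha$ preserves null sets, so there is a null set $\scrN\subset\RR$ (enlarged to contain $\QQ$) off which $\{M_N\}$ equidistributes; because the sampling times $t_N=2\log N$ satisfy $t_{N+1}-t_N\to0$ while the flow moves at unit speed, equidistribution of the continuous ray passes to the discrete orbit $\{M_N\}_{N\in\NN}$.

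Fixing $\alpha\in\RR\setminus\scrN$, I would then realise every element of $\scrP$ as a subsequential limit. Equidistribution with respect to the fully supported probability measure $\mu$ makes $\{M_N\}$ dense in $\GamG$, so given $M\in\GamG$ there is a subsequence $M_{N_i}\to M$; continuity of $\nu[F]$ yields $\int F\,d\rho_{N_i,\alpha}=\nu[F](M_{N_i})\to\nu[F](M)=\int F\,d\nu_M$ for every bounded continuous $F$, i.e.\ $\rho_{N_i,\alpha}\xrightarrow{w}\nu_M$. To obtain $\delta_0$, I would use that every cusp neighbourhood has positive $\mu$-measure, so the orbit enters it at arbitrarily large times; a diagonal extraction produces a subsequence with $M_{N_i}\to\infty$ in $\GamG$, and the cusp behaviour $\nu_M\xrightarrow{w}\delta_0$ as $M\to\infty$ then gives $\rho_{N_i,\alpha}\xrightarrow{w}\delta_0$.

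I expect the genuine obstacle to be not the equidistribution, which is classical, but the cusp behaviour $\nu_M\xrightarrow{w}\delta_0$ as $M$ escapes to infinity. This escape of mass in the theta sum is precisely the non-trivial ingredient in identifying $\scrP$ as the closure of $\{\nu_M\}$, and it is what makes the limit laws genuinely non-Gaussian; verifying it requires quantitative control of the tails of $\nu_M$ deep in the cusp rather than a soft dynamical argument.
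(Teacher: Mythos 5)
Your route is the same as the paper's: part (i) from Theorem \ref{thm1} plus the identification $\overline{\{\nu_M:M\in\GamG\}}=\scrP$, and part (ii) from density of the generic forward geodesic ray $\{\Gamma n(u)\Phi^t\}_{t\ge 0}$ (the paper simply quotes this as well known, whereas you sketch the Hopf argument and, usefully, the passage from the continuous ray to the discrete sampling times $t_N=2\log N$, a point the paper leaves implicit). The one step you do not prove is exactly the one you flag at the end: that $\nu_{M_i}\xrightarrow{w}\delta_0$ whenever $M_i$ diverges in $\GamG$. This is not an optional refinement. In part (i), tightness only tells you that a subsequential limit of $(\rho_{N,\alpha})$ is a weak limit of measures $\nu_{M_i}$; when the $M_i$ have no convergent subsequence in $\GamG$ you must still identify that limit, and this is where the whole content of ``the closure is $\scrP$'' sits. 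In part (ii) the same fact is what realises $\delta_0$ as a limit point. So as written the proposal has a genuine gap, albeit a correctly located one.

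The paper closes this gap in Lemma \ref{last:lem}, and the mechanism is worth noting because it is not ``quantitative control of the tails of $\nu_M$ deep in the cusp'' in the sense of moments: by \eqref{cv1} one has $\int_\CC|z|^2\,d\nu_M(z)=\|\chi\|_2^2=1$ for \emph{every} $M$, so the second moments do not decay and no moment bound can give $\nu_M\to\delta_0$. What happens instead is concentration in the $\vecxi$--variable: writing $M_i=(\tau_i,\phi_i)$ with $v_i=\Im\tau_i\to\infty$, the cusp expansion $|\Theta_f(M_i;\vecxi)|=v_i^{1/4}\sum_{n}|f_{\phi_i}((n-y)v_i^{1/2})|+O_T(v_i^{-T})$ for $f\in\scrS(\RR)$ shows that $|\Theta_f|$ is rapidly small once $y$ is bounded away from $\ZZ$ (say $y\notin[-v_i^{-1/4},v_i^{-1/4}]+\ZZ$), while the exceptional set of $y$ has measure $O(v_i^{-1/4})$; hence $\nu_f[F](M_i)\to F(0)$ for Schwartz $f$. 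The uniform approximation $\sup_M|\nu_\chi[F](M)-\nu_{\tilde f}[F](M)|<\epsilon$ of \eqref{key:in} --- which is the payoff of the $\L^2$ analysis behind Theorem \ref{thm:main} --- then transfers the conclusion to $f=\chi$. Supplying this argument completes your proof; everything else in the proposal matches the paper.
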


The fact that $\nu=\delta_0$ may arise as a limit of certain subsequences raises the question whether a better normalisation will lead to a less singular limit. We will return to this point in the final sections, \S \ref{final0} onward. 

\vspace{10pt}\refstepcounter{sectionno}\S \arabic{sectionno}.
For $f\in\scrS(\RR)$ (Schwartz class), $\tau=u+\i v\in\HH=\{z\in\CC:\Im z>0\}$, $\phi\in\RR$ and $\vecxi=\begin{pmatrix} x \\ y \end{pmatrix}\in\RR^2$, we define the theta function
\begin{equation}\label{Theta}
\Theta_f(\tau,\phi;\vecxi) =
v^{1/4} 
\sum_{n\in\ZZ} f_\phi( (n-y) v^{1/2}) 
e(\tfrac12(n-y)^2 u + n x) 
\end{equation}
with 
\begin{equation}
f_\phi(w) =
\begin{cases}
f(w) & (\phi=0\bmod 2\pi)\\[2mm]
f(-w) & (\phi=\pi\bmod 2\pi)\\[2mm]
\displaystyle
|\sin\phi|^{-1/2} 
\int_{\RR} e\left[\frac{\tfrac12(w^2+w'^2)\cos\phi-
w w'}{\sin\phi}\right]& f(w')\,dw'  \\
& (\phi\neq 0\bmod\pi).
\end{cases}
\end{equation}
The function $f_\phi$ is rapidly decaying for any $\phi$ if $f\in\scrS(\RR)$, and hence the series in \eqref{Theta} converges absolutely. The theta sum is connected to the Birkhoff sum via
\begin{equation}
\frac{S_{N,\alpha}[\psi_{k,\ell}](p,q)}{\sqrt N} = \Theta_\chi\bigg(\ell \alpha+\i N^{-2},0 ; \begin{pmatrix} (k-\frac\ell2) \alpha + \ell p \\ 0 \end{pmatrix}\bigg) \; \psi_{k,\ell}(p,q),
\end{equation}
where $\chi$ is the characteristic function of $(0,1]$, which is obviously not in the Schwartz class. Making sense of $\Theta_\chi$ for general values of $\phi$ will be one of the objectives of our analysis. 

\vspace{10pt}\refstepcounter{sectionno}\S \arabic{sectionno}.
The point of using more variables ($\phi$ and $y$) than we originally need is that the function $\Theta_f(\tau,\phi;\vecxi)$ is a natural automorphic function for $f\in\scrS(\RR)$. The following lemma shows that we only need to consider the absolute value of $\Theta_f$, which will lead to a simpler description of the invariance properties of the theta sum, cf.~\cite[Section 4.5]{Marklof03}.

\begin{lem}\label{lem1}
Let $f=\chi$ be the characteristic function of $(0,1]$, $\alpha\in\RR$ and $N\in\NN$. For $(p,q)$ uniformly distributed in $\TT^2$, the random variable 
\begin{equation}\label{rv1}
\frac{S_{N,\alpha}[\psi_{k,\ell}](p,q)}{\sqrt N}\end{equation}
has the same distribution as the random variable
\begin{equation}\label{rv2}
e(t) \bigg| \Theta_\chi\bigg(\ell\alpha+\i N^{-2},0 ; \begin{pmatrix} x \\ y \end{pmatrix} \bigg)  \bigg|,
\end{equation}
where $(t,x,y)$ is uniformly distributed in $\TT^3$.
\end{lem}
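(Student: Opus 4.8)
The plan is to argue directly from the explicit Gauss-sum form of the theta sum, separating modulus from phase. I would start from the identity recorded above,
\begin{equation}
\frac{S_{N,\alpha}[\psi_{k,\ell}](p,q)}{\sqrt N} = \psi_{k,\ell}(p,q)\,\Theta_\chi\left(\ell\alpha+\i N^{-2},0;\begin{pmatrix}(k-\tfrac\ell2)\alpha+\ell p\\ 0\end{pmatrix}\right),
\end{equation}
and observe that the theta factor depends on $(p,q)$ only through $p$. Writing $\Theta_\chi=|\Theta_\chi|\,e(\vartheta(p))$ for a real phase $\vartheta(p)$, the variable \eqref{rv1} equals $|\Theta_\chi|\,e\bigl(kp+\ell q+\vartheta(p)\bigr)$; thus its modulus is a function of $p$ alone, while $q$ enters only through the overall phase.

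For the phase, fix $p$. Then $kp+\vartheta(p)$ is constant and, since $\ell\neq 0$, the map $q\mapsto \ell q \bmod 1$ pushes Lebesgue measure on $\TT$ forward to Lebesgue measure on $\TT$; hence $s:=kp+\ell q+\vartheta(p)$ is, modulo $1$, uniform on $\TT$ for every fixed $p$, and therefore uniform on $\TT$ and independent of $p$. Consequently \eqref{rv1} has the same distribution as $e(s)\,R$, where $s$ is uniform on $\TT$, independent of
\begin{equation}
R:=\left|\Theta_\chi\left(\ell\alpha+\i N^{-2},0;\begin{pmatrix}X\\ 0\end{pmatrix}\right)\right|,
\end{equation}
and $X:=(k-\tfrac\ell2)\alpha+\ell p\bmod 1$ is uniform on $\TT$ (again because $\ell\neq 0$). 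Comparing with \eqref{rv2}, it remains only to show that $R$ has the same law as $\bigl|\Theta_\chi(\ell\alpha+\i N^{-2},0;\vecxi)\bigr|$ with $\vecxi$ uniform on $\TT^2$.

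This last distributional identity is the step I expect to need the most care, since it asserts that averaging over the auxiliary variable $y$ does not alter the law. Here I would exploit the explicit Gauss sum: writing $\tau=u+\i v$ with $v=N^{-2}$, the cutoff $\chi\bigl((n-y)v^{1/2}\bigr)$ confines the sum to $n\in\{1,\dots,N\}$ for every $y\in[0,1)$, and expanding $(n-y)^2=n^2-2ny+y^2$ gives
\begin{equation}
\Theta_\chi\left(\tau,0;\begin{pmatrix}x\\ y\end{pmatrix}\right) = e\left(\tfrac12 y^2 u\right)\,\Theta_\chi\left(\tau,0;\begin{pmatrix}x-yu\\ 0\end{pmatrix}\right),
\end{equation}
whence $\bigl|\Theta_\chi(\tau,0;\vecxi)\bigr|=\bigl|\Theta_\chi(\tau,0;(x-yu,0))\bigr|$ for $y\in[0,1)$. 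Since $\Theta_\chi$ is $1$-periodic in $x$, for each fixed $y$ the shift $x\mapsto x-yu$ preserves the uniform law of $x$ on $\TT$, so the conditional distribution of $\bigl|\Theta_\chi(\tau,0;\vecxi)\bigr|$ given $y$ is independent of $y$ and coincides with the law of $R$; integrating over $y$ uniform on $\TT$ then reproduces the law of $R$, as required. The only points to check carefully are the $1$-periodicity of $|\Theta_\chi|$ in $y$ (seen by reindexing $n\mapsto n+1$, which yields $\Theta_\chi(\tau,0;(x,y+1))=e(x)\,\Theta_\chi(\tau,0;(x,y))$) and the fact that the null set of $y$ on which the summation range shifts does not affect the distribution. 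Combining this with the phase analysis identifies \eqref{rv1} with \eqref{rv2} in distribution and completes the proof.
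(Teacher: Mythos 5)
Your argument is correct and follows essentially the same route as the paper's proof: the uniform phase comes from $q\mapsto \ell q$ with $\ell\neq 0$, the uniformity of $x$ comes from $p\mapsto \ell p$, and the $y$-average is shown to be harmless via the identity $\Theta_\chi(\tau,0;(x,y))=e(\tfrac12 y^2u)\,\Theta_\chi(\tau,0;(x-yu,0))$ for $y\in[0,1)$ together with translation invariance of Lebesgue measure in $x$ --- which is exactly the computation the paper compresses into its remarks about translation invariance and the reindexing $\sum_{0<n\leq N}=\sum_{0<n-y\leq N}$. You simply make explicit the details (the $e(\tfrac12 y^2u)$ phase, the $1$-periodicity in $y$, the independence of the phase from $p$) that the paper leaves implicit.
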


\begin{proof}
The invariance of Lebesgue measure under translations shows that the random variable \eqref{rv1} has the same distribution as (set $u=\ell\alpha$)
\begin{equation}
N^{-1/2} e(t)   \bigg|\sum_{0<n\leq N}  e(\tfrac12 n^2 u + n x)  \bigg|
\end{equation}
which in turn has the same distribution as 
\begin{equation}
N^{-1/2} e(t)   \bigg|\sum_{0<n\leq N}  e(\tfrac12(n-y)^2 u + n x)  \bigg|
\end{equation}
for every fixed $y$. The lemma follows from the observation that 
\begin{equation}
\sum_{0<n\leq N}  e(\tfrac12(n-y)^2 u + n x) =\sum_{0<n-y\leq N}  e(\tfrac12(n-y)^2 u + n x) 
\end{equation}
for $0\leq y< 1$.
\end{proof}

\vspace{10pt}\refstepcounter{sectionno}\S \arabic{sectionno}.
To state the invariance properties of $|\Theta_f|$, define the semi-direct product
group
$$
\widehat G=\SLR \ltimes \RR^{2}
$$
with multiplication law
$$
(M;\vecxi)(M';\vecxi') =(MM';\vecxi+M\vecxi') .
$$
We identify $\SLR$ with $\HH\times [0,2\pi)$ as usual via the Iwasawa decomposition
\begin{equation}\label{iwasawa}
M = \begin{pmatrix} 1 & u \\ 0 & 1 \end{pmatrix}
\begin{pmatrix} v^{1/2} & 0 \\ 0 & v^{-1/2} \end{pmatrix}
\begin{pmatrix} \cos\phi & -\sin\phi \\ \sin\phi & \cos\phi \end{pmatrix} \mapsto (u+\i v,\phi) .
\end{equation}
With this identification, group multiplication in $\SLR$ becomes the group action of $\SL(2,\RR)$ on $\HH\times[0,2\pi)$,
\begin{equation}\label{iwaact}
\begin{pmatrix}
a & b \\
c & d
\end{pmatrix}
(\tau,\phi)= \bigg(\frac{a\tau+b}{c\tau+d}, \phi+\arg(c\tau+d) \bmod2\pi\bigg) .
\end{equation}
For $\gamma\in\SLR$, we use the notation $(u_\gamma+\i v_\gamma,\phi_\gamma):=\gamma(\tau,\phi)$
and $\gamma\tau:=u_\gamma+\i v_\gamma$.

The following result is well known; for an explicit proof see  \cite{Marklof03}.

\begin{thm}
For $f\in\scrS(\RR)$, the function $|\Theta_f(M;\vecxi)|=|\Theta_f(\tau,\phi;\vecxi)|$ is continuous on $\widehat G$ and invariant under the left action of the discrete subgroup
\begin{equation}
\widehat\Gamma=\left\{ 
\bigg( \begin{pmatrix}
a & b \\
c & d
\end{pmatrix}
; 
\begin{pmatrix}
\frac12 ab \\
\frac12 cd 
\end{pmatrix}
+\vecm \bigg)
: \; \begin{pmatrix}
a & b \\
c & d
\end{pmatrix} \in\SL(2,\ZZ),\; \vecm\in\ZZ^{2}
\right\} \subset \widehat G.
\end{equation}
\end{thm}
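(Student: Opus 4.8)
The plan is to first establish continuity and then verify left-invariance of $|\Theta_f|$ on a set of generators of $\widehat\Gamma$. For continuity, note that since $f\in\scrS(\RR)$ the transformed function $f_\phi$ is rapidly decreasing, with all Schwartz seminorms bounded uniformly for $\phi$ in compact sets; hence the series \eqref{Theta} converges absolutely and uniformly on compacta, and $|\Theta_f|$ is continuous at every point with $\phi\not\equiv0\bmod\pi$. The only delicate points are $\phi\equiv0,\pi\bmod2\pi$, where the integral definition of $f_\phi$ degenerates. There one checks by a stationary-phase (approximate-identity) argument that the Gaussian-type kernel $|\sin\phi|^{-1/2}e[\cdots]$ converges, as $\phi\to0$ (resp.\ $\phi\to\pi$), to evaluation at $f(w)$ (resp.\ $f(-w)$) up to a unimodular phase, so that $\phi\mapsto f_\phi$ is continuous into $\scrS(\RR)$ after removing this phase; the phase is immaterial once the modulus is taken.

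For invariance I would use that $\SLZ$ is generated by $\begin{pmatrix}1&1\\0&1\end{pmatrix}$ and $\begin{pmatrix}0&-1\\1&0\end{pmatrix}$, so that $\widehat\Gamma$ is generated by the corresponding lifts $\big(\begin{pmatrix}1&1\\0&1\end{pmatrix};\begin{pmatrix}1/2\\0\end{pmatrix}\big)$ and $\big(\begin{pmatrix}0&-1\\1&0\end{pmatrix};\vecnull\big)$ together with the lattice translations $(\mathrm{id};\vecm)$, $\vecm\in\ZZ^2$. It then suffices to verify $|\Theta_f|$-invariance under these three families. The lattice translations act by $\vecxi\mapsto\vecxi+\vecm$; shifting the summation index by $m_2$ reproduces the series up to the factor $e(m_2 x)$, which has modulus one. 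The generator $\begin{pmatrix}1&1\\0&1\end{pmatrix}$ sends $(\tau,\phi;\vecxi)$ to $(\tau+1,\phi;(x+y+\tfrac12,y)^{\mathrm t})$; completing the square and using that $n(n+1)$ is even shows the series is reproduced up to the $n$-independent factor $e(\tfrac12 y^2)$, again of modulus one. Both invariances are therefore elementary, the half-integer shift in the lift being exactly what is needed to convert $\tfrac12 n^2$ into $\tfrac12 n(n+1)$.

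The substantive step is the generator $\begin{pmatrix}0&-1\\1&0\end{pmatrix}$, which acts by $\tau\mapsto-1/\tau$, $\phi\mapsto\phi+\arg\tau$ and $\vecxi\mapsto(-y,x)^{\mathrm t}$. Here I would apply Poisson summation to the sum over $n$ in \eqref{Theta}. The Gaussian-type weight $e(\tfrac12(n-y)^2 u)$ together with the $v^{1/2}$-scaling and the $v^{1/4}$ prefactor is arranged so that Poisson summation turns the sum at $(\tau,\phi;\vecxi)$ into the sum at $(-1/\tau,\phi+\arg\tau;(-y,x)^{\mathrm t})$, with $f_\phi$ replaced by its fractional Fourier transform; the composition law for the maps $f\mapsto f_\phi$ (which realise a projective Weil representation of $\SLR$) identifies the latter with $f_{\phi+\arg\tau}$ up to a multiplier. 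The decisive feature of this normalisation is that all non-unimodular factors — in particular the weight-$\tfrac12$ automorphy factor $(c\tau+d)^{1/2}$ — are absorbed by the $v^{1/4}$ and $|\sin\phi|^{-1/2}$ normalisations, leaving only the theta multiplier, an eighth root of unity, which is killed on passing to $|\Theta_f|$.

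I expect the main obstacle to be the bookkeeping in this last step: carrying out the Poisson summation and verifying the fractional-Fourier composition identity while tracking the $|\sin\phi|^{-1/2}$ normalisation and the attendant phases consistently across the degenerate values $\phi\equiv0,\pi\bmod\pi$. Since the end result only concerns $|\Theta_f|$, however, none of these phases need be computed explicitly: it suffices to exhibit them as unimodular, which sidesteps the genuinely delicate determination of the metaplectic cocycle and is precisely why the absolute value is taken. The explicit computation is carried out in \cite{Marklof03}.
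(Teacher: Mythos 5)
The paper does not actually prove this theorem --- it is quoted as well known, with the explicit proof deferred to \cite{Marklof03} --- and your sketch reconstructs precisely the argument carried out there: reduction to the generators (which requires noting that the half-integer vectors $\vecv_\gamma=\trans(\tfrac12 ab,\tfrac12 cd)$ satisfy $\vecv_{\gamma\gamma'}\equiv\vecv_\gamma+\gamma\vecv_{\gamma'}\bmod\ZZ^2$, so that $\widehat\Gamma$ is a group generated by the lifts of $T$, $S$ and the lattice translations), elementary completion of the square for $\widehat T$ and $(\mathrm{id};\vecm)$, and Poisson summation combined with the projective composition law of $f\mapsto f_\phi$ for $\widehat S$, with all non-unimodular phases dying on taking $|\cdot|$. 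Your elementary computations are correct, and the one substantive step you leave to \cite{Marklof03} --- the $\widehat S$ bookkeeping --- is exactly the step the paper itself leaves there.
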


In other words, we may view $|\Theta_f|$ as a continuous function on $\GamGG$. This implies that, for any bounded $F\in\C(\CC,\RR)$, the function 
\begin{equation}\label{nuf}
\nu_f[F]: \GamG\to \RR,\qquad M\mapsto\int_{\TT^3} F\big(e(t)|\Theta_f(M;\vecxi)|\big) \, dt\,d\vecxi
\end{equation}
is bounded continuous. 
The following theorem notes that the same statement is true when we choose a more singular $f$.

\begin{thm}\label{thm:main}
For $f\in\L^2(\RR)$ and bounded $F\in\C(\CC,\RR)$, the function $\nu_f[F]$ is bounded continuous.
\end{thm}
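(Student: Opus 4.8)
The plan is to combine an exact $\L^2$-isometry in the translation variable $\vecxi$ with a density argument, thereby reducing the claim to the continuity of $\nu_f[F]$ already known for $f\in\scrS(\RR)$. The key input is the identity
\[
\int_{\TT^2} \big|\Theta_f(\tau,\phi;\vecxi)\big|^2\,d\vecxi = \|f\|_{\L^2(\RR)}^2 ,
\]
valid for every $M\leftrightarrow(\tau,\phi)\in\SLR$ and every $f\in\scrS(\RR)$. This follows directly from \eqref{Theta}: squaring and integrating in $x$ over $\TT$ annihilates all off-diagonal terms by orthogonality of the characters $e(nx)$, leaving $v^{1/2}\sum_n|f_\phi((n-y)v^{1/2})|^2$; integrating in $y$ over $\TT$ then unfolds the sum into the full integral $\int_\RR|f_\phi(s)|^2\,ds=\|f_\phi\|_{\L^2}^2$, and the map $f\mapsto f_\phi$ is unitary on $\L^2(\RR)$ for every $\phi$ (it is the identity, a reflection, or a fractional Fourier transform). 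Crucially the isometry constant is independent of $M$. Since $f\mapsto\Theta_f(M;\cdot)$ is linear, this identity extends $\Theta_f(M;\cdot)$ to a bona fide element of $\L^2(\TT^2)$ for every $f\in\L^2(\RR)$, so that $\nu_f[F](M)$ is well defined, and it gives the uniform estimate $\|\Theta_f(M;\cdot)-\Theta_g(M;\cdot)\|_{\L^2(\TT^2)}=\|f-g\|_{\L^2(\RR)}$ for all $M$.

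Next I would choose $f_j\in\scrS(\RR)$ with $f_j\to f$ in $\L^2(\RR)$ and prove that $\nu_{f_j}[F]\to\nu_f[F]$ uniformly on $\GamG$. Writing $B=\|F\|_\infty$ and using $\big||\Theta_{f_j}|-|\Theta_f|\big|\le|\Theta_{f_j}-\Theta_f|=|\Theta_{f_j-f}|$, I would split the domain $\TT^3$ according to whether $\vecxi$ lies in the \emph{good} set, where $|\Theta_{f_j}(M;\vecxi)|,|\Theta_f(M;\vecxi)|\le R$ and $|\Theta_{f_j-f}(M;\vecxi)|<\delta$, or in its complement. On the good set the two arguments $e(t)|\Theta_{f_j}|$ and $e(t)|\Theta_f|$ lie in the disc of radius $R$ at distance $<\delta$, so uniform continuity of $F$ on that compact disc bounds the integrand by $\varepsilon$ (uniformly in $t$); on the bad set the integrand is at most $2B$, while Chebyshev's inequality together with the isometry bounds the bad set's measure by $R^{-2}\|f\|_{\L^2}^2+R^{-2}\|f_j\|_{\L^2}^2+\delta^{-2}\|f_j-f\|_{\L^2}^2$. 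Choosing first $R$ large (using $\sup_j\|f_j\|_{\L^2}<\infty$), then $\delta$ from uniform continuity of $F$ on the disc of radius $R$, and finally $j$ large, renders the whole difference $O(\varepsilon)$ uniformly in $M$.

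Since each $\nu_{f_j}[F]$ is bounded continuous on $\GamG$ by the preceding discussion, and a uniform limit of bounded continuous functions is bounded continuous, it follows that $\nu_f[F]$ is bounded continuous, with the bound $\|\nu_f[F]\|_\infty\le\|F\|_\infty$ immediate. The main obstacle is precisely the order-of-quantifiers issue forced by $F$ being only bounded continuous rather than uniformly continuous on all of $\CC$: the truncation radius $R$ must be fixed before $\delta$ and $j$, and the entire estimate must be uniform in $M$. This uniformity is exactly what the $M$-independence of the isometry constant delivers, so the whole argument hinges on establishing the displayed identity cleanly.
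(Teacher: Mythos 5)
Your proof is correct and rests on the same two pillars as the paper's: the $M$-independent Parseval identity $\int_{\TT^2}|\Theta_f(M;\vecxi)|^2\,d\vecxi=\|f\|_2^2$ and a Chebyshev truncation, combined with density of $\scrS(\RR)$ in $\L^2(\RR)$ to reduce to the known continuity of $\nu_{\tilde f}[F]$ for Schwartz $\tilde f$. Where you diverge is in how the mere boundedness-plus-continuity of $F$ is handled. The paper decomposes the \emph{test function}, writing $F=F_0+F_1+F_2$ with $F_0\in\C_0^\infty(\CC,\RR)$ supported in a disc of radius $2R$, $F_1$ of small sup norm, and $F_2$ supported outside the disc of radius $R$; it then needs a separate Lipschitz lemma giving $|\nu_f[F_0](M)-\nu_{\tilde f}[F_0](M)|\le C\|f-\tilde f\|_2$ via the $\L^1$ bound $\int_{\TT^2}|\Theta_{f-\tilde f}|\,d\vecxi\le\|f-\tilde f\|_2$, plus the Chebyshev lemma for $F_2$. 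You instead decompose the \emph{integration domain} into a good set (where both theta values lie in the disc of radius $R$ and differ by less than $\delta$) and its complement, invoking uniform continuity of $F$ on a compact disc rather than a global Lipschitz constant for a smoothed piece. Your route is slightly more economical — it avoids constructing the smooth cutoff $F_0$ and the associated $\L^1$ estimate, at the cost of tracking one extra Chebyshev term $\delta^{-2}\|f_j-f\|_2^2$ — and you are right that the whole argument hinges on the quantifier order $R$, then $\delta$, then $j$, all uniform in $M$; the paper's version has exactly the same structure with $R$ then $\tilde f$. You also correctly flag the point, implicit in the paper, that for general $f\in\L^2(\RR)$ the series defining $\Theta_f$ must be interpreted via the $\L^2(\TT^2)$-isometric extension rather than pointwise. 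Both proofs are sound; yours is a legitimate, marginally more elementary variant.
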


\vspace{10pt}\refstepcounter{sectionno}\S \arabic{sectionno}.
We will split the proof of Theorem \ref{thm:main} into several steps. 

\begin{lem}
For $f\in\L^2(\RR)$
\begin{equation}\label{cv1}
\int_{\TT^2} |\Theta_f(M;\vecxi)|^2 \, d\vecxi =\| f \|_2^2.
\end{equation}
and
\begin{equation}\label{cv2}
\int_{\TT^2} |\Theta_f(M;\vecxi)| \, d\vecxi \leq \| f \|_2.
\end{equation}
\end{lem}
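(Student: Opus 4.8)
The plan is to establish the $\L^2$-identity \eqref{cv1} first for Schwartz functions, extend it to $\L^2(\RR)$ by density, and then obtain the inequality \eqref{cv2} as an immediate consequence via Cauchy--Schwarz.

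For $f\in\scrS(\RR)$ the series \eqref{Theta} converges absolutely, so I would begin by expanding $|\Theta_f(\tau,\phi;\vecxi)|^2$ as a double sum over $n,m\in\ZZ$ and integrating over $x\in[0,1)$ first. The entire $x$-dependence sits in the single factor $e((n-m)x)$, so orthogonality of the additive characters annihilates every off-diagonal term and leaves
\[
\int_0^1 |\Theta_f(\tau,\phi;\vecxi)|^2\,dx = v^{1/2}\sum_{n\in\ZZ}\big|f_\phi\big((n-y)v^{1/2}\big)\big|^2 .
\]
Integrating this over $y\in[0,1)$ and exchanging sum and integral by Tonelli (all summands are nonnegative), the translates $n-y$ with $n\in\ZZ$ and $y\in[0,1)$ tile $\RR$, so the combined sum-integral unfolds to a single integral $\int_\RR |f_\phi(s\,v^{1/2})|^2\,ds$ over the real line. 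The substitution $w=s\,v^{1/2}$ produces the Jacobian $v^{-1/2}$, which cancels the prefactor $v^{1/2}$, giving $\int_{\TT^2}|\Theta_f|^2\,d\vecxi=\|f_\phi\|_2^2$.

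To finish the Schwartz case I would identify $\|f_\phi\|_2$ with $\|f\|_2$. When $\phi\equiv 0$ or $\phi\equiv\pi\pmod{2\pi}$ this is trivial since $f_\phi(w)=f(\pm w)$; for all other $\phi$ the map $f\mapsto f_\phi$ is the fractional Fourier transform, whose kernel is unitary on $\L^2(\RR)$ --- this is precisely the isometry underlying the well-known automorphy of $|\Theta_f|$, see \cite{Marklof03}. Thus $\int_{\TT^2}|\Theta_f|^2\,d\vecxi=\|f\|_2^2$ for every $f\in\scrS(\RR)$. Read differently, this identity says that $f\mapsto\Theta_f(M;\cdot)$ is a linear isometry from the dense subspace $\scrS(\RR)\subset\L^2(\RR)$ into $\L^2(\TT^2)$; it therefore extends uniquely and continuously to all of $\L^2(\RR)$, which is exactly the sense in which $\Theta_f(M;\cdot)$ is to be read for $\L^2$ data, and \eqref{cv1} then holds verbatim for every $f\in\L^2(\RR)$.

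Finally, \eqref{cv2} is Cauchy--Schwarz against the constant $1$: because $\vol(\TT^2)=1$,
\[
\int_{\TT^2}|\Theta_f(M;\vecxi)|\,d\vecxi \le \Big(\int_{\TT^2}|\Theta_f(M;\vecxi)|^2\,d\vecxi\Big)^{1/2}\Big(\int_{\TT^2}1\,d\vecxi\Big)^{1/2}=\|f\|_2 .
\]
The only genuine obstacle I foresee is the passage to $\L^2$ data: one must verify that the fractional Fourier transform is honestly unitary, with no residual $\phi$-dependent constant, and recognise that it is the Schwartz-class \emph{identity} \eqref{cv1}, rather than the pointwise series \eqref{Theta}, that licenses the definition of $\Theta_f(M;\cdot)$ for general $f\in\L^2(\RR)$, since that series need not converge pointwise in this generality.
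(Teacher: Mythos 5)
Your argument is correct and is essentially the paper's own proof: the $x$-integration via orthogonality of characters is what the paper calls Parseval's identity, the $y$-unfolding to $\|f_\phi\|_2^2$ and the identity $\|f_\phi\|_2=\|f\|_2$ (unitarity of the fractional Fourier transform, citing \cite{Marklof03}) are the same steps, and \eqref{cv2} is the paper's H\"older step. Your closing remark --- that for general $f\in\L^2(\RR)$ the object $\Theta_f(M;\cdot)$ must be read as the isometric extension from $\scrS(\RR)$ rather than as a pointwise convergent series --- is a point the paper leaves implicit, and is worth making.
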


\begin{proof}
The left hand side of \eqref{cv1} equals by Parseval's identity
\begin{equation}
v^{1/2}
\sum_{n\in\ZZ} \int_\TT  |f_\phi( (n-y) v^{1/2}) |^2 dy =\|f_\phi\|_2^2,
\end{equation}
and the claim follows from the fact that $\|f_\phi\|_2=\| f\|_2$ (see e.g.~\cite{Marklof03}). Inequality \eqref{cv2} follows from H\"older's inequality.
\end{proof}

\begin{lem}
Let $F\in\C_0^\infty(\CC,\RR)$ (infinitely differentiable and compactly supported). Then there exists a constant $C>0$ such that for all $f, \tilde f\in\L^2(\RR)$ and all $M\in\GamG$
\begin{equation}
|\nu_f[F](M)-\nu_{\tilde f}[F](M)|< C \| f-\tilde f \|_2 . 
\end{equation}
\end{lem}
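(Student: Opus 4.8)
The plan is to estimate the difference $|\nu_f[F](M)-\nu_{\tilde f}[F](M)|$ by exploiting the Lipschitz regularity of $F$ together with the $\L^2$-isometry property of the theta function recorded in the previous lemma. Since $F\in\C_0^\infty(\CC,\RR)$, it is globally Lipschitz with some constant $L=\|\nabla F\|_\infty$, so that
\begin{equation}
|F(z)-F(z')| \leq L\,|z-z'|
\end{equation}
for all $z,z'\in\CC$. Applying this pointwise inside the integral defining $\nu_f[F]$ in \eqref{nuf}, with $z=e(t)|\Theta_f(M;\vecxi)|$ and $z'=e(t)|\Theta_{\tilde f}(M;\vecxi)|$, and using that $e(t)$ has modulus one, I would obtain
\begin{equation}
|\nu_f[F](M)-\nu_{\tilde f}[F](M)| \leq L \int_{\TT^3} \big| |\Theta_f(M;\vecxi)| - |\Theta_{\tilde f}(M;\vecxi)| \big| \, dt\,d\vecxi.
\end{equation}

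The next step is to pass from the difference of absolute values to the absolute value of a difference, using the elementary reverse triangle inequality $\big||a|-|b|\big|\leq|a-b|$, which gives
\begin{equation}
\big| |\Theta_f(M;\vecxi)| - |\Theta_{\tilde f}(M;\vecxi)| \big| \leq |\Theta_f(M;\vecxi)-\Theta_{\tilde f}(M;\vecxi)| = |\Theta_{f-\tilde f}(M;\vecxi)|,
\end{equation}
where the final equality follows from the linearity of $\Theta_f$ in $f$, evident from the defining series \eqref{Theta} (the construction $f\mapsto f_\phi$ is itself linear, being either $f(\pm\,\cdot)$ or an integral transform against a fixed kernel). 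Since the integrand no longer depends on $t$, the integral over $\TT^3$ collapses to an integral over $\TT^2$.

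It then remains to bound $\int_{\TT^2}|\Theta_{f-\tilde f}(M;\vecxi)|\,d\vecxi$, and here I would invoke inequality \eqref{cv2} from the previous lemma with $f$ replaced by $f-\tilde f$, yielding
\begin{equation}
\int_{\TT^2}|\Theta_{f-\tilde f}(M;\vecxi)|\,d\vecxi \leq \|f-\tilde f\|_2.
\end{equation}
Combining the three displayed bounds gives $|\nu_f[F](M)-\nu_{\tilde f}[F](M)|\leq L\,\|f-\tilde f\|_2$ uniformly in $M\in\GamG$, so that $C=L=\|\nabla F\|_\infty$ serves as the claimed constant. I do not expect any genuine obstacle here: the estimate is a clean combination of the Lipschitz property of the smooth compactly supported test function, the linearity of the theta function in its Schwartz argument, and the $\L^2$-bound \eqref{cv2}. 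The only minor point to verify carefully is the linearity of $f\mapsto\Theta_f$, which requires checking that $f\mapsto f_\phi$ is linear in each of the three cases of its definition; this is immediate since each case is either an evaluation or a linear integral operator. The strict inequality in the statement is presumably harmless slack, as the argument naturally produces the non-strict bound with $C=\|\nabla F\|_\infty$, and any strictly larger constant then works.
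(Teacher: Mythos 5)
Your proposal is correct and follows essentially the same route as the paper's own proof: a global Lipschitz bound for $F$, the reverse triangle inequality together with linearity of $f\mapsto\Theta_f$ to reduce to $\Theta_{f-\tilde f}$, and the $\L^1$ bound \eqref{cv2}. The only difference is cosmetic: you make explicit the step $\big||\Theta_f|-|\Theta_{\tilde f}|\big|\leq|\Theta_{f-\tilde f}|$ and the case-by-case linearity of $f\mapsto f_\phi$, which the paper leaves implicit.
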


\begin{proof}
For every $F \in \C_0^\infty(\CC,\RR)$ we can find a $C >0$ such that
\begin{equation}
| F(z_1) - F(z_2) | \leq C |z_1-z_2|
\end{equation}
for all $z_1,z_2\in\CC$.
Hence
\begin{equation}
\begin{split}
|\nu_f[F](M)-\nu_{\tilde f}[F](M)| & \leq \int_{\TT^3} \big| F\big(e(t)\Theta_f(M;\vecxi)\big) - F\big(e(t)\Theta_{\tilde f}(M;\vecxi)\big) \big|\, dt\,d\vecxi \\
& \leq C \int_{\TT^2} \big| \Theta_f(M;\vecxi)- \Theta_{\tilde f}(M;\vecxi) \big| \,d\vecxi \\
& = C \int_{\TT^2} \big| \Theta_{f-\tilde f}(M;\vecxi) \big| \,d\vecxi \\
& \leq C \| f-\tilde f \|_2
\end{split}
\end{equation}
by \eqref{cv2}.
\end{proof}

\begin{lem}\label{lem4}
For all $f\in\L^2(\RR)$, bounded $F \in \C(\CC,\RR)$ and $M\in\GamG$,
\begin{equation}
|\nu_f[F](M)| \leq \sup_{z\in\CC} |F(z)| .
\end{equation}
\end{lem}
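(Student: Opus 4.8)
The plan is to treat this as an immediate pointwise estimate under the integral sign, using only that $\TT^3$ carries a probability measure and that $F$ is globally bounded. No automorphic input or modular invariance is needed here; the lemma is essentially the statement that averaging against the probability measure $dt\,d\vecxi$ on $\TT^3$ cannot increase the sup norm.

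Concretely, first I would fix $f\in\L^2(\RR)$, a bounded $F\in\C(\CC,\RR)$, and $M\in\GamG$, and observe that the integrand in the definition \eqref{nuf} is a genuine function of $(t,\vecxi)\in\TT^3$: for each such $(t,\vecxi)$ the point $w=e(t)\,|\Theta_f(M;\vecxi)|$ lies in $\CC$, since $|\Theta_f(M;\vecxi)|$ is a nonnegative real number and $e(t)$ has modulus one. (One should note here that $\Theta_f(M;\vecxi)$ is well defined for $\L^2$ data as a limit of the Schwartz-class case, but this is not where the content of the present lemma lies.) Consequently $|F(w)|\le\sup_{z\in\CC}|F(z)|$ holds for every $(t,\vecxi)$, and the supremum on the right is finite by the boundedness hypothesis on $F$.

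Next I would apply the triangle inequality for integrals, moving the absolute value inside and then bounding the integrand by its supremum:
\begin{equation}
|\nu_f[F](M)|
\le \int_{\TT^3}\big|F\big(e(t)\,|\Theta_f(M;\vecxi)|\big)\big|\,dt\,d\vecxi
\le \sup_{z\in\CC}|F(z)|\int_{\TT^3}dt\,d\vecxi.
\end{equation}
The proof is then concluded by recording that $\TT^3=\RR^3/\ZZ^3$ carries normalised Lebesgue measure, so $\int_{\TT^3}dt\,d\vecxi=1$, which yields exactly the claimed bound $|\nu_f[F](M)|\le\sup_{z\in\CC}|F(z)|$.

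There is no genuine obstacle in this argument: the only point requiring a moment's care is that the estimate comes out with constant exactly $1$ rather than the volume of $\TT^3$ times something, which is guaranteed by the normalisation of Lebesgue measure on the torus. The lemma is stated separately because this uniform-in-$M$ sup bound is precisely what is needed, together with the preceding Lipschitz estimate and an approximation of a general bounded continuous $F$ by functions in $\C_0^\infty(\CC,\RR)$, to upgrade the continuity of $\nu_f[F]$ from the $\C_0^\infty$ case to arbitrary bounded continuous $F$ in the proof of Theorem \ref{thm:main}.
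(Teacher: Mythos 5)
Your proof is correct and is exactly the argument the paper has in mind: the paper simply states that the bound is immediate from the definition \eqref{nuf}, and your write-up spells out the underlying pointwise bound and the fact that $\TT^3$ carries a probability measure. Nothing further is needed.
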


\begin{proof}
This is immediate from \eqref{nuf}.
\end{proof}

\begin{lem}
Let $R>0$ and $F\in\C(\CC,\RR)$ be bounded, such that $F(z)=0$ for all $|z|\leq R$. Then, for all $f\in\L^2(\RR)$,
\begin{equation}
 |\nu_f[F](M)| < R^{-2} \|f\|_2^2 \sup_{z\in\CC}|F(z)| .
\end{equation}
\end{lem}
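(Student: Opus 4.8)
The plan is to bound $|\nu_f[F](M)|$ by a tail estimate, using the fact that $F$ vanishes on the disk $\{|z|\le R\}$. Starting from the definition \eqref{nuf}, since $F(z)=0$ whenever $|z|\le R$, the integrand $F(e(t)|\Theta_f(M;\vecxi)|)$ is supported on the set where $|\Theta_f(M;\vecxi)|>R$ (note $|e(t)|=1$ so the absolute value is unaffected by $t$). On this set I bound $|F|$ crudely by its supremum, so that
\begin{equation*}
|\nu_f[F](M)| \le \sup_{z\in\CC}|F(z)| \int_{\TT^2} \mathbf{1}\{|\Theta_f(M;\vecxi)|>R\}\, d\vecxi ,
\end{equation*}
where I have already carried out the (trivial) $t$-integration over $\TT$.

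The key step is then to control the measure of the tail set $\{\vecxi\in\TT^2 : |\Theta_f(M;\vecxi)|>R\}$ by Chebyshev's inequality. Since
\begin{equation*}
\int_{\TT^2}\mathbf{1}\{|\Theta_f(M;\vecxi)|>R\}\,d\vecxi \le R^{-2}\int_{\TT^2}|\Theta_f(M;\vecxi)|^2\,d\vecxi,
\end{equation*}
I can now invoke the $\L^2$-identity \eqref{cv1} from the preceding lemma, which gives exactly $\int_{\TT^2}|\Theta_f(M;\vecxi)|^2\,d\vecxi = \|f\|_2^2$. Combining the two displays yields $|\nu_f[F](M)| \le R^{-2}\|f\|_2^2 \sup_{z}|F(z)|$, which is the claimed bound (the strict inequality follows since $F$ vanishing on a disk of positive radius forces the estimate to be non-saturating unless $F\equiv 0$, in which case both sides are zero; in any case the non-strict version suffices and the strict one holds whenever $\sup|F|>0$).

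There is no real obstacle here: the argument is a textbook Chebyshev tail bound married to the variance computation \eqref{cv1}. The only point requiring a moment's care is the interplay between the $\TT^3$-integration in the definition of $\nu_f[F]$ and the $\TT^2$-integration in \eqref{cv1}; since $F$ is evaluated at $e(t)|\Theta_f|$ and the modulus kills any $t$-dependence of the argument, the $t$-integral contributes a factor of $\int_{\TT}dt = 1$ and the whole estimate collapses cleanly to the $\vecxi$-integral over $\TT^2$. I would therefore expect this lemma to occupy only a few lines, with the substance being the reduction to \eqref{cv1} via Chebyshev.
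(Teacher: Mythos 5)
Your argument is exactly the paper's: bound $|F|$ by its supremum on the set $\{|\Theta_f(M;\vecxi)|>R\}$, estimate the measure of that set by Chebyshev's inequality, and invoke \eqref{cv1}. The proposal is correct and takes essentially the same approach as the paper.
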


\begin{proof}
We have
\begin{equation}
\begin{split}
|\nu_f[F](M)|  & \leq \sup_{z\in\CC} |F(z)| \meas\{ \vecxi\in\TT^2 : |\Theta_f(M;\vecxi)| > R \} \\
& < R^{-2} \sup_{z\in\CC} |F(z)| \int_{\TT^2} |\Theta_f(M;\vecxi)|^2 \, d\vecxi ,
\end{split}
\end{equation}
which proves the claim in view of \eqref{cv1}.
\end{proof}

\begin{proof}[Proof of Theorem \ref{thm:main}]
Let $F$ be bounded continuous and $f\in\L^2(\RR)$. In view of Lemma \ref{lem4}, it remains to be shown that $\nu_f[F](M)$ is continuous. Given $R>0$, we write $F=F_0+F_1+F_2$ so that
\begin{itemize}
\item $F_0\in\C_0^\infty(\CC,\RR)$ with support in the disc of radius $2R$;
\item $F_1\in\C(\CC,\RR)$ with $\sup |F_1|\leq R^{-1}$;
\item $F_2\in\C(\CC,\RR)$ bounded and supported outside the disk of radius $R$. 
\end{itemize}
We may furthermore choose $\tilde f\in\scrS(\RR)$ such that $\| f-\tilde f\|_2 < R^{-1}$.
The above lemmas imply that (by taking $R$ sufficiently large) that, given any $\epsilon>0$, there exists $\tilde f\in\scrS(\RR)$ such that
\begin{equation}\label{key:in}
\sup_{M\in\GamG} |\nu_f[F](M)-\nu_{\tilde f}[F](M)|< \epsilon .
\end{equation}
The continuity of $\nu_f[F](M)$ thus follows from the continuity of $\nu_{\tilde f}[F](M)$.
\end{proof}

\vspace{10pt}\refstepcounter{sectionno}\S \arabic{sectionno}.
With the choice $\nu[F]=\nu_\chi[F]$, Lemma \ref{lem1} and Theorem \ref{thm:main} imply Theorem \ref{thm1} (recall that in Iwasawa coordinates \eqref{iwasawa} we have $n(\ell\alpha)\Phi^t=(\ell\alpha +\i N^{-2},0)$ with $N=\e^{t/2}$). 

We will now conclude our study by explaining the remarks leading up to Theorem \ref{thm2} and its proof.

\begin{lem}\label{last:lem}
\begin{equation}
\overline{\{ \nu_M : M\in\GamG \}} =\scrP .
\end{equation}
\end{lem}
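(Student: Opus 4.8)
My plan is to establish the two inclusions $\scrP\subseteq\overline{\{\nu_M:M\in\GamG\}}$ and $\overline{\{\nu_M:M\in\GamG\}}\subseteq\scrP$ separately, using throughout that the family $\{\nu_M\}_{M\in\GamG}$ is tight: by \eqref{cv1} and Chebyshev's inequality, $\meas\{\vecxi\in\TT^2:|\Theta_\chi(M;\vecxi)|>R\}<R^{-2}$ uniformly in $M$, so $\nu_M(\{|z|>R\})<R^{-2}$ for every $M$. Consequently every weak limit of a sequence $(\nu_{M_j})_j$ is again a probability measure, and since the space of probability measures on $\CC$ is metrizable in the weak topology it suffices to argue with sequences. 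As $\scrP$ differs from $\{\nu_M:M\in\GamG\}$ only by the point $\delta_0$, the two inclusions reduce to showing (a) that $\delta_0$ lies in the closure, and (b) that every limit of a sequence $(\nu_{M_j})_j$ equals $\nu_{M'}$ for some $M'\in\GamG$ or equals $\delta_0$.

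For (a) I would exhibit an explicit sequence escaping into the cusp. Take $M_j=(\i v_j,0)\in\GamG$ with $v_j\to\infty$. Since $\chi$ is supported in $(0,1]$, for $v_j>1$ the term $\chi((n-y)v_j^{1/2})$ in \eqref{Theta} is nonzero only for the single integer $n=1$ and only when $y\in[1-v_j^{-1/2},1)$; hence $|\Theta_\chi(M_j;\vecxi)|=v_j^{1/4}$ on a set of $\vecxi$-measure $v_j^{-1/2}$ and vanishes otherwise. For any bounded continuous $F$ this gives $\nu_{M_j}[F]=(1-v_j^{-1/2})F(0)+O(v_j^{-1/2}\sup|F|)\to F(0)$, so $\nu_{M_j}\xrightarrow{w}\delta_0$.

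For (b), suppose $\nu_{M_j}\xrightarrow{w}\mu$. By reduction theory for $\GamG$, after passing to a subsequence either $M_j\to M'\in\GamG$, or the $\Gamma$-reduced representative of $M_j$ has imaginary part $v_j\to\infty$ (escape into the cusp). In the first case the continuity of $\nu_\chi[F]=\nu[F]$ established in Theorem \ref{thm:main} yields $\nu_{M_j}[F]\to\nu_{M'}[F]$ for every bounded continuous $F$, so $\mu=\nu_{M'}\in\scrP$. It therefore remains to treat the escape case and show $\mu=\delta_0$, which is the technical heart of the argument.

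To handle the escape case I would first reduce to a Schwartz density: given $\epsilon>0$, estimate \eqref{key:in} from the proof of Theorem \ref{thm:main} furnishes $\tilde f\in\scrS(\RR)$ with $\sup_{M}|\nu_\chi[F](M)-\nu_{\tilde f}[F](M)|<\epsilon$, so it suffices to prove $\nu_{\tilde f}[F](M)\to F(0)$ as $M$ escapes into the cusp. For this I would show that, for Schwartz $f$, $|\Theta_f(M;\cdot)|\to0$ in measure on $\TT^2$ as $v\to\infty$, uniformly in $u$ and $\phi$. Writing $g=\sup_\phi|f_\phi|$, which is rapidly decaying because the family $\{f_\phi\}$ is uniformly Schwartz (a standard property of these fractional Fourier transforms, cf.~\cite{Marklof03}), one has $|\Theta_f|\le v^{1/4}\sum_n g((n-y)v^{1/2})$; splitting this sum according to $|n-y|<\delta$ or $|n-y|\ge\delta$ bounds $|\Theta_f|$ by $C_A\,v^{1/4-A/2}\delta^{-A}$ off a $y$-set of measure at most $2\delta$. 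Letting $v\to\infty$ and then $\delta\to0$ gives the convergence in measure, which together with boundedness of $F$ yields $\nu_{\tilde f}[F](M)\to F(0)$ and hence, via \eqref{key:in}, $\nu_\chi[F](M)\to F(0)$, i.e.\ $\mu=\delta_0$. The main obstacle is precisely this uniform-in-$(u,\phi)$ decay in measure of the theta sum high in the cusp; the remaining ingredients are tightness bookkeeping and direct appeals to the continuity and approximation statements already proved.
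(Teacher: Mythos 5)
Your proof is correct and follows essentially the same route as the paper: convergent sequences of $M_j$ are handled by the continuity of $\nu[F]$ from Theorem \ref{thm:main}, while divergent (cuspidal) sequences are shown to give $\delta_0$ by first passing to a Schwartz approximant via \eqref{key:in} and then using the concentration of $|\Theta_f(M;\vecxi)|$ near $y\in\ZZ$ as $v\to\infty$. The only (harmless) deviation is that where the paper invokes the asymptotic expansion of \cite[Prop.~4.10]{Marklof03}, you use the elementary triangle-inequality bound $|\Theta_f|\le v^{1/4}\sum_{n}|f_\phi((n-y)v^{1/2})|$ together with the uniform rapid decay of $f_\phi$, which suffices since only an upper bound away from $y\in\ZZ$ is needed.
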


\begin{proof}
By the continuity of $\nu[F]=\nu_\chi[F]$, all that remains is to show that if $(M_i)_i$ is a divergent sequence in $\GamG$ (i.e., given any compact $K\subset\GamG$ there is $i_0$ such that $M_i\notin K$ for all $i\geq i_0$) then $\nu_{M_i}\xrightarrow{w}\delta_0$. In terms of the Iwasawa coordinates \eqref{iwasawa}, we can parametrize such $M_i$ by $M_i=(\tau_i,\phi_i)$, where divergence is equivalent to $v_i=\Im\tau_i\to\infty$. 
In this limit we have (cf.~\cite[Prop.~4.10]{Marklof03}) for $f\in\scrS(\RR)$
\begin{equation}
|\Theta_f(M_i,\vecxi)| = v_i^{1/4} \sum_{n\in\ZZ} |f_{\phi_i}((n-y) v_i^{1/2}) | + O_T(v_i^{-T})
\end{equation}
for any $T\geq 1$ and uniformly in $\vecxi=\trans(x,y)\in\TT^2$. Hence  
\begin{equation}
\nu_f[F](M_i)= \int_{y\notin[-v_i^{-1/4},v_i^{-1/4}]+\ZZ} F\big(e(t)|\Theta_f(M;\vecxi)|\big) \, dt\,d\vecxi +O(v_i^{-1/4}) \to F(0) ,
\end{equation}
by the rapid decay of $f$. Inequality \eqref{key:in} shows that this also holds for $f=\chi$, i.e., $\nu[F](M_i)\to F(0)$.
\end{proof}

Theorem \ref{thm:main} and Lemma \ref{last:lem} complete the proof of Theorem \ref{thm2} (i). Theorem \ref{thm2} (ii) follows from the well known fact that, for almost every $u\in\RR$, the orbit $\{ \Gamma n(u) \Phi^t \}_{t\geq 0}$ is dense in $\GamG$.

\vspace{10pt}\refstepcounter{sectionno}\S \arabic{sectionno}.\label{final0}
We now return to the question in \S \ref{intro}, as to whether the appearance of $\delta_0$ as a weak limit in Theorem \ref{thm2} can be avoided by a different normalisation. We can give an immediate answer when $\alpha=0$. In this case
\begin{equation}
\Lambda_0(p,q)=(p,q+p) 
\end{equation}
becomes a rotation in the second component; in fact a random rotation since $(p,q)$ are uniformly distributed in $\TT^2$. The Birkhoff sum is thus
\begin{equation}
S_{N,0} [\psi_{k,\ell}](p,q) = \sum_{n=1}^N e(k p +\ell (q+np)) = e((k+\ell) p+\ell q)\, \frac{1-e(N\ell p)}{1-e(\ell p)} .
\end{equation}
Considered as a random variable, $S_{N,0} [\psi_{k,\ell}]$ has the same distribution as
\begin{equation}
e(t)\, \frac{1-e(N x)}{1-e(x)} ,
\end{equation}
with $(t,x)$ uniformly distributed in $\TT^2$, 
and therefore convergences in distribution, {\em without any normalisation} as $N\to\infty$, to the random variable
\begin{equation}\label{firstchoice}
e(t)\, \frac{1-e(y)}{1-e(x)} ,
\end{equation}
with $(t,x,y)$ uniformly distributed in $\TT^3$. Note that the density of the radial distribution of \eqref{firstchoice} is
\begin{equation}
\frac{2}{\pi^2 r} \log \bigg|\frac{1+r}{1-r}\bigg| .
\end{equation}

Our Theorem \ref{thm2}, on the other hand, produces only the weaker statement that $N^{-1/2} S_{N,0} [\psi_{k,\ell}]$ converges in distribution to $0$. What is more, if we chose the test function $\psi(p,q)=\chi_A(q)-|A|$, where $\chi_A$ is the characteristic function of an interval $A\subset \TT$ of length $|A|$, Kesten's theorem \cite{Kesten1,Kesten2} states that 
$(\log N)^{-1} S_{N,0} [\psi]$
convergences in distribution to a Cauchy law. 

\vspace{10pt}\refstepcounter{sectionno}\S \arabic{sectionno}.\label{final}
Let us now turn to the question of the normalisation of $S_{N,\alpha} [\psi]$ for general $\alpha$, and $\psi=\psi_{k,\ell}$. 

For $u\in\RR$, define
\begin{equation}
c_N(u) = \min\{ c\in \NN : \|cu\|\leq N^{-1}\} ,
\end{equation}
where $\|\,\cdot\,\|$ denotes the distance to the nearest integer.
Dirichlet's box principle implies that $c_N(u)\leq N$. We also have $\liminf_{N\to\infty} \frac{c_N(u)}{N}=0$ unless $u$ is of bounded type.

Instead of $X_{N,\alpha}$ as in \eqref{XNa}, we now consider the random variable
\begin{equation}
\widetilde X_{N,\alpha} = \frac{S_{N,\alpha} [\psi_{k,\ell}](p,q)}{\sqrt{c_N(\ell\alpha)}} ;
\end{equation}
denote by $\widetilde\rho_{N,\alpha}$ its probability measure. 
We will show that the limit distributions that arise as certain subsequences of $\widetilde X_{N,\alpha}$ are described by the family of random variables parametrized by $\omega,\varphi\in\RR$,
\begin{equation}\label{Ydef}
Y_{\omega,\varphi}= \frac{e(t)}{2\pi}  \bigg| \sum_{n\in\ZZ} \frac{1+e(t'-n\varphi)}{n-y} \; e(\tfrac12 (n-y)^2 \omega + nx) \bigg| ,
\end{equation}
where $(t,t',x,y)$ are uniformly distributed in $\TT^4$. The probability measure associated with $Y_{\omega,\varphi}$ is denoted by $\nu_{\omega,\varphi}$. Note that $Y_{\omega,\varphi}$ has the same distribution as $Y_{\omega+1,\varphi}$ and $Y_{\omega,\varphi+1}$. That is,
\begin{equation}
\nu_{\omega,\varphi}=\nu_{\omega+1,\varphi}=\nu_{\omega,\varphi+1}.
\end{equation}

\vspace{10pt}\refstepcounter{sectionno}\S \arabic{sectionno}.
Let $d_N(u)$ be the unique integer so that $-\frac12\leq c_N(u)u + d_N(u)<\frac12 $. It follows from the definition of $c_N(u)$ that $\gcd(c_N(u),d_N(u))=1$. We denote by $a_N(u)$ the inverse of $d_N(u)$ mod $c_N(u)$. We write $\{ x\}$ for the fractional part of $x\in\RR$.

\begin{thm}\label{thm500}
Fix $u=\ell\alpha\in\RR$ and consider a subsequence $( N_i )_{i\in\NN}$ such that 
\begin{equation}\label{bau}
\bigg\{ \frac{N_i}{c_{N_i}(u)} \bigg\} \to\varphi ,\qquad\bigg\{ \frac{a_{N_i}(u)}{c_{N_i}(u)}\bigg\}\to\omega ,
\end{equation}
and 
\begin{equation}\label{baubau}
\frac{c_{N_i}(u)}{N_i}\to 0,  \qquad \limsup_{i\to\infty} \frac{N_i^2 \| c_{N_i}(u) u\|}{c_{N_i}(u)} <\infty.
\end{equation}
Then $\widetilde\rho_{N_i,\alpha}\xrightarrow{w} \nu_{\omega,\varphi}$.
\end{thm}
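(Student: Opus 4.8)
The plan is to reduce $\widetilde\rho_{N,\alpha}$ to the limiting distribution of a renormalised theta value along a geodesic ray, to push that ray high into the cusp of $\GamG$ by a modular substitution, and then to read off $\nu_{\omega,\varphi}$ from the resulting cusp asymptotics. Throughout write $u=\ell\alpha$, $c=c_N(u)$, $d=d_N(u)$, $a=a_N(u)$ and $\beta=cu+d$, so that $|\beta|=\|cu\|\le N^{-1}$ and $\gcd(c,d)=1$.

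First I would repeat the argument of Lemma~\ref{lem1} verbatim, only changing the normalisation: the distribution of $\widetilde X_{N,\alpha}$ coincides with that of $c^{-1/2}e(t)\,|\sum_{0<n\le N}e(\tfrac12 n^2 u+nx)|$ with $(t,x)$ uniform on $\TT^2$, and inserting the dummy shift $y$ exactly as there turns this into $(N/c)^{1/2}\,e(t)\,|\Theta_\chi(n(u)\Phi^t;\vecxi)|$ with $N=\e^{t/2}$ and $\vecxi=\trans(x,y)$ uniform on $\TT^2$. The whole question thus becomes the limiting distribution, over uniform $\vecxi$, of $(N/c)^{1/2}|\Theta_\chi|$ along the ray $n(u)\Phi^t$.

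Next I would renormalise to the cusp. Choose $\gamma=\bigl(\begin{smallmatrix} a & b\\ c & d\end{smallmatrix}\bigr)\in\SLZ$ with $ad-bc=1$; it carries the rational $-d/c$ to $\i\infty$. A direct computation of the Iwasawa coordinates of $\gamma\,n(u)\Phi^t$ gives imaginary part $\Im(\gamma\tau)=N^{-2}/(\beta^2+c^2N^{-4})$, which by \eqref{baubau} is comparable to $(N/c)^2\to\infty$, entry angle $\phi_\gamma=\arg(\beta+\i cN^{-2})$, and a real part whose reduction modulo $1$ is governed by $\{a/c\}$ and, under \eqref{bau}--\eqref{baubau}, tends to $\omega$. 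Since $|\Theta_\chi|$ descends to $\GamGG$ — the invariance theorem, extended from $\scrS(\RR)$ to $\chi\in\L^2(\RR)$ through Theorem~\ref{thm:main} — I may replace the evaluation point by this reduced one, the accompanying $\widehat\Gamma$-shift of $\vecxi$ being immaterial because $\vecxi$ is uniform on $\TT^2$. Passing, if necessary, to a further subsequence so that $N^2\|cu\|/c$ converges (possible by \eqref{baubau}), high in the cusp the theta sum is controlled by $v^{1/4}\sum_n \chi_{\phi_\gamma}((n-y)v^{1/2})\,e(\tfrac12(n-y)^2 u'+nx)$. Unlike in Lemma~\ref{last:lem}, the cutoff $\chi$ is not Schwartz: at the relevant angle $\chi_{\phi_\gamma}$ is essentially the Fourier transform $\widehat\chi(w)=(1-e(-w))/(2\pi\i w)$, which decays only like $1/w$, so the sum does not localise. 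Instead the factor $v^{1/4}\widehat\chi((n-y)v^{1/2})$ tends to $(2\pi\i)^{-1}(1-e(-(n-y)N/c))(n-y)^{-1}$, and writing $N/c=\lfloor N/c\rfloor+\{N/c\}$ splits the endpoint phase into an $n$-independent part which, since $N/c\to\infty$, equidistributes modulo $1$ and produces the extra independent uniform variable $t'$, and a genuinely $n$-dependent part $e(-n\{N/c\})\to e(-n\varphi)$. Collecting factors reproduces exactly the summand $(1+e(t'-n\varphi))(n-y)^{-1}e(\tfrac12(n-y)^2\omega+nx)$ of $Y_{\omega,\varphi}$, together with the outer $e(t)$ and the prefactor $1/2\pi$, so that $\widetilde\rho_{N_i,\alpha}\xrightarrow{w}\nu_{\omega,\varphi}$.

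The hardest part will be making this cusp asymptotic rigorous for the non-Schwartz $\chi$: the $1/w$ tail of $\widehat\chi$ is precisely borderline for absolute convergence and for interchanging limit and summation. I would therefore first prove the statement for a Schwartz regularisation $\tilde f$ of $\chi$, where the analysis of Lemma~\ref{last:lem} applies, and then transfer it using the $\L^2$-approximation bound \eqref{key:in} together with the second-moment identity \eqref{cv1} to control the tail uniformly in the evaluation point. The second delicate point, and the reason the hypotheses \eqref{bau}--\eqref{baubau} are imposed rather than merely $\|cu\|\le N^{-1}$, is to force the entry angle $\phi_\gamma$ and the reduced real part of $\gamma\,n(u)\Phi^t$ to converge to the values ($\pi/2$ and $\omega$) that produce the clean pure-Fourier form above, rather than a fractional-Fourier transform at an intermediate angle; verifying this, and that the residual quadratic phase coming from $\beta\neq0$ does not survive in the limit, is the technical crux of the argument.
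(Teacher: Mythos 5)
Your overall strategy (reduce to a theta sum as in Lemma~\ref{lem1}, move the point $n(u)\Phi^t$ into the cusp with $\gamma=\bigl(\begin{smallmatrix} a & b\\ c & d\end{smallmatrix}\bigr)$, and read the limit law off the cusp asymptotics of the non-Schwartz cutoff) is the paper's strategy, and your heuristic identification of the summand of $Y_{\omega,\varphi}$ (the $1/(n-y)$ tail, the splitting of $N/c$ into integer and fractional parts producing $t'$ and $e(-n\varphi)$) is correct. But there are two genuine gaps. First, you have mislocated the source of $\omega$. Under \eqref{baubau} one only gets $\liminf_i\sin\phi_i>0$, \emph{not} $\phi_i\to\pi/2$: indeed $\cot\phi_i=N_i^2(c_iu+d_i)/c_i$, whose modulus is exactly the quantity whose $\limsup$ is assumed finite (it is $\approx 1.36$ in the paper's Figure~\ref{fig1}), so in general $\chi_{\phi_i}$ is a genuine fractional Fourier transform carrying the chirp $e(\tfrac12 w^2\cot\phi_i)$. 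That chirp is not "residual'': it contributes $v_i\cot\phi_i$ (which is of order $(N_i/c_i)^2\to\infty$) to the quadratic phase coefficient, and it is only the exact identity $u_i+v_i\cot\phi_i=a_i/c_i$ (equation \eqref{S2}, proved in Lemma~\ref{lem77}) that converts the divergent pair $(u_i,\,v_i\cot\phi_i)$ into the convergent $\{a_i/c_i\}\to\omega$. The reduced real part $u_i$ alone does not tend to $\omega$, and an attempt to show the chirp "does not survive in the limit'' will fail.

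Second, your proposed rigorization — prove the result for a Schwartz regularisation $\tilde f$ and transfer via the $\L^2$ bound \eqref{key:in} — cannot work for the $\sqrt{c_N}$ normalisation, and in fact contradicts your own (correct) earlier observation that the limit law is produced precisely by the $1/w$ tail of $\widehat\chi$. For Schwartz $f$ the renormalised quantity $(N/c)^{1/2}|\Theta_f|$ tends to $0$ off a vanishing neighbourhood of $y\in\ZZ$, so the analogue of Theorem~\ref{thm500} is simply false for $\tilde f$; correspondingly, the Lipschitz/$\L^2$ estimate now carries the divergent prefactor $(N/c)^{1/2}\|\chi-\tilde f\|_2$. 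The paper instead works directly with $\chi$: Lemma~\ref{lem:chi} computes $\chi_\phi$ by integration by parts, producing the explicit principal term $\chi_\phi^{(0)}$ with an $O(w^{-2})$ error, and Lemma~\ref{lem99} converts that pointwise error into an $\L^2(dx)$ bound of size $O(v^{-3/2})$ via Parseval, after which Chebyshev's inequality gives closeness of distributions. You need some substitute for this direct asymptotic analysis of $\chi_\phi$ at a general angle bounded away from $0$ and $\pi$; without it, and without \eqref{S2}, the argument does not close.
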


Note that for the above sequence we have $\rho_{N_i,\alpha}\xrightarrow{w} \delta_0$ (in the original $\sqrt N$ normalisation of Theorem \ref{thm2}). 

For every $u$ which is not of bounded type, there is a sequence $N_i$ for which assumption \eqref{baubau} is satisfied. A geometric justification of this claim follows from Lemma \ref{lem77} below. Hypothesis \eqref{bau} can be obtained by passing to a suitable subsequence.

Figures \ref{fig1} and \ref{fig2} show the value distribution of $|\widetilde X_{N,\alpha}|$ for $\alpha=u=\pi-3$ and two different values of $N$, compared with the distribution of $|Y_{\omega,\varphi}|$.

\begin{figure}
\begin{center}
\includegraphics[width=0.7\textwidth]{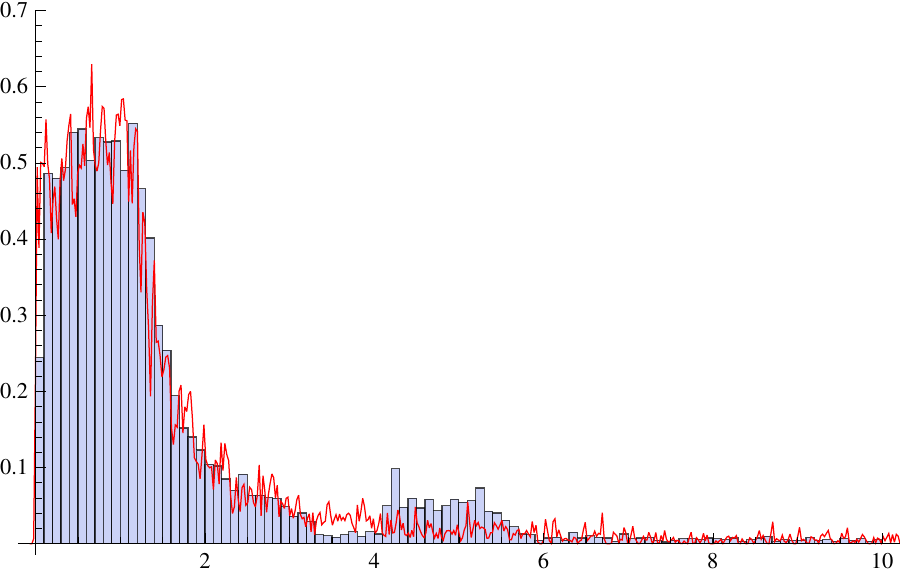}
\end{center}
\caption{The histogram displays the value distribution of $|\widetilde X_{N,\alpha}|$ for $\alpha=u=\pi-3$ and $N=2260$, where $p$ is sampled over $10,000$ points in $[0,1]$. We have $c_N(u)=113$, $d_N(u)=-16$ and $a_N=7$, and furthermore $\frac{N^2 \| c_{N}(u) u\|}{c_{N}(u)}\approx 1.363$, $\frac{c_N(u)}{N}=0.05$.
The solid curve is the distribution of the random variable $|Y_{\omega,\varphi}|$ for $\omega=7/113$ and $\varphi=\{ 2260/113\}=0$ sampled at random points $t',x,y\in[0,1]$, with $10,000$ values each. The series \eqref{Ydef} defining $Y_{\omega,\varphi}$ is truncated at $n=\pm 1000$. The first distribution is plotted using Mathematica's {\tt Histogram} command with bin width 0.1, the second using the {\tt SmoothHistogram} command with a Gaussian kernel and bandwidth 0.01.} 
\label{fig1}
\end{figure}
\begin{figure}
\begin{center}
\includegraphics[width=0.7\textwidth]{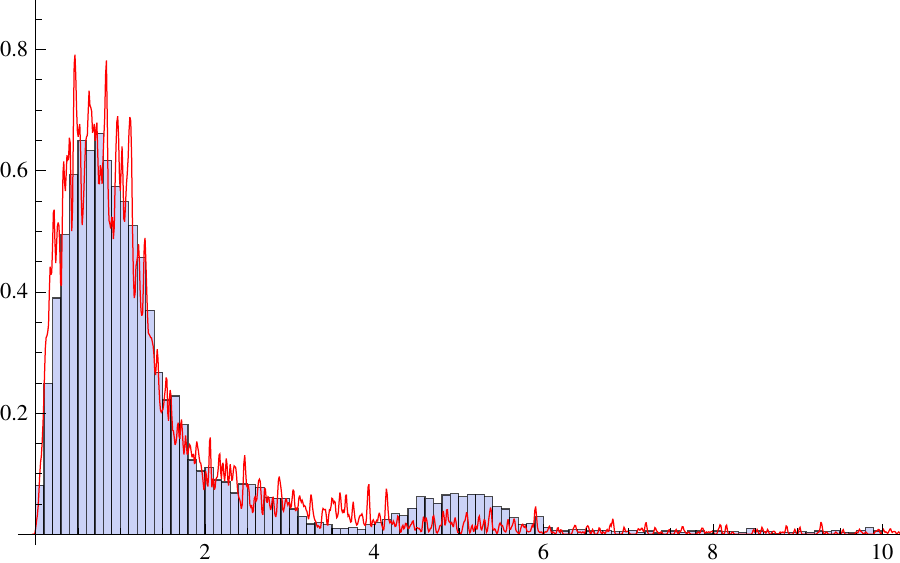}
\end{center}
\caption{The same histogram as in Figure \ref{fig1}, now for $N=2300$. We have again $c_N(u)=113$, $d_N(u)=-16$ and $a_N=7$, and furthermore $\frac{N^2 \| c_{N}(u) u\|}{c_{N}(u)}\approx 1.411$, $\frac{c_N(u)}{N}\approx 0.0491$.
The solid curve is the distribution of the random variable $|Y_{\omega,\varphi}|$ for $\omega=7/113$ and $\varphi=\{2300/113\}\approx 0.354$.} \label{fig2}
\end{figure}

\vspace{10pt}\refstepcounter{sectionno}\S \arabic{sectionno}.
The proof of Theorem \ref{thm500} will require the following lemmas.

\begin{lem}\label{lem77}
Let $(N_i)_{i\in\NN}$ as in Theorem \ref{thm500}, and set
$c_i=c_{N_i}(u)$, $d_i=d_{N_i}(u)$ and $a_i=a_{N_i}(u)$. Then there exist $b_i\in\ZZ$ such that
\begin{equation}
\gamma_i=\begin{pmatrix} a_i & b_i \\ c_i & d_i \end{pmatrix} \in\SLZ .
\end{equation}
For $(u_i+\i v_i,\phi_i):=\gamma_i(u+\i N_i^{-2},0)$ we have the relations
\begin{equation}\label{S1}
v_i^{-1/2}\cos\phi_i = N_i(c_iu+d_i),\qquad v_i^{-1/2}\sin\phi_i = \frac{c_i}{N_i},
\end{equation}
\begin{equation}\label{S2}
u_i + v_i\cot\phi_i =\frac{a_i}{c_i}
\end{equation}
and 
\begin{equation} \label{S3}
v_i\to\infty, \qquad \liminf_{i\to\infty} \sin\phi_i >0 .
\end{equation}
\end{lem}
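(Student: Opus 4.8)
The plan is to read off every assertion from the explicit Iwasawa action \eqref{iwaact}, reducing the whole lemma to elementary algebra controlled by the two quantities $\|c_i u\|$ and $c_i/N_i$; only the hypotheses \eqref{baubau} will be needed (condition \eqref{bau} plays no role here). First I would construct $b_i$. Since $a_i$ is the inverse of $d_i$ modulo $c_i$ (which exists because $\gcd(c_i,d_i)=1$), we have $a_i d_i\equiv 1\pmod{c_i}$, so $c_i\mid (a_i d_i-1)$ and I may set $b_i=(a_i d_i-1)/c_i\in\ZZ$. Then $a_i d_i-b_i c_i=1$, i.e.\ $\gamma_i\in\SLZ$.

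Next, write $\tau=u+\i N_i^{-2}$ and $c_i\tau+d_i=A_i+\i B_i$ with $A_i=c_i u+d_i$ and $B_i=c_i N_i^{-2}$. Since $c_i u+d_i$ is by definition the representative of $c_i u$ in $[-\tfrac12,\tfrac12)$, one has $|A_i|=\|c_i u\|\le N_i^{-1}$ and $B_i>0$. The standard formulae $\Im(\gamma_i\tau)=\Im(\tau)/|c_i\tau+d_i|^2$ and $\phi_i=\arg(c_i\tau+d_i)$ give
\[
v_i=\frac{N_i^{-2}}{A_i^2+B_i^2},\qquad \cos\phi_i=\frac{A_i}{\sqrt{A_i^2+B_i^2}},\qquad \sin\phi_i=\frac{B_i}{\sqrt{A_i^2+B_i^2}},
\]
whence $v_i^{-1/2}=N_i\sqrt{A_i^2+B_i^2}$ and the two relations in \eqref{S1} follow at once, since $v_i^{-1/2}\cos\phi_i=N_i A_i=N_i(c_i u+d_i)$ and $v_i^{-1/2}\sin\phi_i=N_i B_i=c_i/N_i$.

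For \eqref{S2} I would compute $u_i=\Re(\gamma_i\tau)$ by rationalising, and combine it with $v_i\cot\phi_i=v_i A_i/B_i=A_i/\big(c_i(A_i^2+B_i^2)\big)$ (using $B_i=c_i N_i^{-2}$). Writing both over the common denominator $A_i^2+B_i^2=|c_i\tau+d_i|^2$, the claim $u_i+v_i\cot\phi_i=a_i/c_i$ reduces to the identity $\Re\big((a_i\tau+b_i)\overline{(c_i\tau+d_i)}\big)+A_i/c_i=(a_i/c_i)\,|c_i\tau+d_i|^2$; on expanding both sides every term cancels precisely because $a_i d_i-b_i c_i=1$.

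Finally, for \eqref{S3} I would substitute $|A_i|=\|c_i u\|$ and $B_i=c_i/N_i^2$ to obtain the two clean formulas
\[
v_i^{-1}=N_i^2\|c_i u\|^2+(c_i/N_i)^2,\qquad \sin^2\phi_i=\frac{1}{1+\beta_i^2},\quad \beta_i:=\frac{N_i^2\|c_i u\|}{c_i}.
\]
This is where the hypotheses \eqref{baubau} do the work, and I expect it to be the main (if modest) obstacle: the bound $\|c_i u\|\le N_i^{-1}$ alone only yields $N_i^2\|c_i u\|^2\le 1$, so both conditions must be fed in. Writing $N_i^2\|c_i u\|^2=\beta_i\,c_i\,\|c_i u\|\le \beta_i\,c_i/N_i$ and using $\limsup_i\beta_i<\infty$ together with $c_i/N_i\to 0$ shows $v_i^{-1}\to 0$, i.e.\ $v_i\to\infty$. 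The same $\limsup_i\beta_i<\infty$ bounds $\beta_i^2$, so $\liminf_i\sin^2\phi_i=\liminf_i(1+\beta_i^2)^{-1}>0$, and since $\sin\phi_i>0$ this gives $\liminf_i\sin\phi_i>0$.
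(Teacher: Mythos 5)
Your proposal is correct and follows essentially the same route as the paper: the same construction $b_i=(a_id_i-1)/c_i$, the relations \eqref{S1}--\eqref{S2} by direct computation from the Iwasawa action \eqref{iwaact}, and \eqref{S3} from the identity $\sin^2\phi_i=\bigl(1+N_i^4(c_iu+d_i)^2/c_i^2\bigr)^{-1}$ together with \eqref{baubau}. You merely spell out the ``direct computation'' (and the $v_i\to\infty$ step) that the paper leaves implicit.
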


\begin{proof}
Since $a_i$ is the inverse of $d_i\bmod c_i$, $a_id_i-1$ is divisible by $c_i$ and thus $\det\gamma_i=a_id_i-b_ic_i=1$ has an integer solution $b_i$. Relations \eqref{S1} and \eqref{S2} follow from \eqref{iwaact} by direct computation. As to \eqref{S3}, we have
\begin{equation} \label{sini}
\sin\phi_i = \frac{c_i}{N_i} v_i^{1/2} = \bigg(1+ \frac{N_i^4}{c_i^2}  (c_iu+d_i)^2\bigg)^{-1/2} ,
\end{equation}
which is bounded away from zero by assumption \eqref{baubau}.
\end{proof}

The above proof shows that \eqref{S3} is in fact equivalent to \eqref{baubau}. It is a well known fact that if $u$ is not of bounded type, then the geodesic $\Gamma \{ (u+\i\e^{-t},0) : \RR_{\geq 0}\}$ is unbounded in $\GamG$ and therefore there is a subsequence of times $t_i\geq 0$ so that $\Gamma (u+\i\e^{-t_i},0)=\Gamma(u_i+\i v_i,\phi_i)$ with $v_\i\to\infty$ and $\sin\phi_i$ bounded away from zero. This shows that there exist subsequences $N_i$ satisfying \eqref{sini} and thus \eqref{baubau} for all $u\in\RR$ that are not of bounded type.

\begin{lem}\label{lem:chi}
Given $\phi_0>0$, there is a constant $C>0$ such that for all $\phi\notin [-\phi_0,\phi_0]+\pi\ZZ$, $w\in\RR_{>0}$,
\begin{equation}
\big|\chi_\phi(w) - \chi_\phi^{(0)}(w)\big| \leq C w^{-2},
\end{equation}
where
\begin{equation}
\chi_\phi^{(0)}(w)= \epsilon_\phi |\sin\phi|^{1/2} e(\tfrac12 w^2 \cot\phi) \frac{1-e(\tfrac12 \cot\phi-\frac{w}{\sin\phi})}{2\pi\i w}
\end{equation}
and $\epsilon_\phi=1$ if $\phi\in[0,\pi)+2\pi\ZZ$ and $\epsilon_\phi=-1$ if $\phi\in[\pi,2\pi)+2\pi\ZZ$.
\end{lem}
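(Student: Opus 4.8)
The plan is to write $\chi_\phi(w)$ explicitly and treat it as a non-stationary oscillatory integral whose main term comes from the endpoints $w'=0$ and $w'=1$ via a single integration by parts. Since $f=\chi$ is the indicator of $(0,1]$ and $\phi\neq 0\bmod\pi$, the defining formula reads
\[
\chi_\phi(w)=|\sin\phi|^{-1/2}\,e(\tfrac12 w^2\cot\phi)\,I(w),\qquad I(w)=\int_0^1 e\big(g(w')\big)\,dw',
\]
with $g(w')=(\tfrac12 w'^2\cos\phi-ww')/\sin\phi$, so that $g(0)=0$ and $g(1)=\tfrac12\cot\phi-w/\sin\phi$. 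The decisive structural fact is that $g'(w')=(w'\cos\phi-w)/\sin\phi$ never vanishes on $[0,1]$ once $w$ is bounded away from $0$: for $w\geq 2$ and $w'\in[0,1]$ one has $|w'\cos\phi-w|\geq w-1\geq w/2$, uniformly in $\phi$.

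For $w\geq 2$ I would integrate by parts once, writing $e(g)=(2\pi\i\,g')^{-1}(e(g))'$, to obtain $I(w)=B(w)-R(w)$ with boundary term
\[
B(w)=\frac{\sin\phi\,e(g(1))}{2\pi\i(\cos\phi-w)}+\frac{\sin\phi}{2\pi\i w}
\]
and remainder $R(w)=-\frac{\sin\phi\cos\phi}{2\pi\i}\int_0^1(w'\cos\phi-w)^{-2}e(g(w'))\,dw'$. Replacing the factor $(\cos\phi-w)^{-1}$ in $B$ by $-w^{-1}$ and multiplying back by $|\sin\phi|^{-1/2}e(\tfrac12 w^2\cot\phi)$ reproduces exactly $\chi_\phi^{(0)}(w)$, using $|\sin\phi|^{-1/2}\sin\phi=\epsilon_\phi|\sin\phi|^{1/2}$ with $\epsilon_\phi=\sgn(\sin\phi)$, which matches the stated sign convention. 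The two errors are then bounded crudely: the discrepancy $(\cos\phi-w)^{-1}+w^{-1}=\cos\phi/(w(\cos\phi-w))$ is $O(w^{-2})$ since $|w(\cos\phi-w)|\geq w^2/2$, and $|R(w)|\leq\frac{|\sin\phi\cos\phi|}{2\pi}\int_0^1(w/2)^{-2}\,dw'=O(w^{-2})$; after multiplying by $|\sin\phi|^{-1/2}|\sin\phi\cos\phi|=|\sin\phi|^{1/2}|\cos\phi|\leq 1$ these give $|\chi_\phi(w)-\chi_\phi^{(0)}(w)|\leq C\,w^{-2}$ for $w\geq 2$ with an absolute constant.

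It remains to treat $0<w<2$, where the bound $Cw^{-2}$ is weak and trivial estimates suffice; this is the only place where the hypothesis $\phi\notin[-\phi_0,\phi_0]+\pi\ZZ$ is genuinely needed. There $|I(w)|\leq 1$ yields $|\chi_\phi(w)|\leq|\sin\phi|^{-1/2}\leq(\sin\phi_0)^{-1/2}$, while $|\chi_\phi^{(0)}(w)|\leq|\sin\phi|^{1/2}/(\pi w)\leq 1/(\pi w)$; since $w<2$ gives $1\leq 4w^{-2}$ and $w^{-1}\leq 2w^{-2}$, both terms are $O(w^{-2})$ with a constant depending only on $\phi_0$. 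Taking the larger of the two constants finishes the argument. I do not anticipate any serious obstacle: this is a textbook non-stationary-phase estimate, and the only points demanding care are (a) identifying the leading boundary term with $\chi_\phi^{(0)}$, including the sign $\epsilon_\phi$, and (b) verifying that all constants are uniform in $\phi$ on the admissible range — automatic for $w\geq 2$, and enforced by $|\sin\phi|\geq\sin\phi_0$ in the regime of small $w$.
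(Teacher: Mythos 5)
Your proof is correct, and it rests on the same basic idea as the paper's --- one integration by parts in the oscillatory integral $\int_0^1 e(g(w'))\,dw'$, with the boundary contribution identified as $\chi_\phi^{(0)}$ --- but the implementation is genuinely different. You integrate against the exact non-stationary-phase weight $1/g'(w')=\sin\phi/(w'\cos\phi-w)$, which forces the restriction $w\ge 2$ (so that $|w'\cos\phi-w|\ge w-1\ge w/2$ uniformly), a separate trivial treatment of $0<w<2$, and the extra endpoint approximation replacing $(\cos\phi-w)^{-1}$ by $-w^{-1}$ before $\chi_\phi^{(0)}$ can be recognized. The paper instead integrates by parts against the constant weight $-\sin\phi/(2\pi\i w)$, i.e.\ it uses the exact identity $e(g)=-\frac{\sin\phi}{2\pi\i w}\frac{d}{dw'}e(g)+\frac{w'\cos\phi}{w}e(g)$, so that $\chi_\phi^{(0)}$ appears \emph{exactly} as the boundary term for every $w>0$ and the whole error sits in the single remainder $\frac{\cos\phi}{w}\int_0^1 w'e(g)\,dw'$, which is then shown to be $O(w^{-1})$ by a second integration by parts of the same kind. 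Your route buys a textbook framework and absolute constants in the range $w\ge 2$ (your observation that $\phi_0$ enters only through the small-$w$ bound $|\chi_\phi(w)|\le|\sin\phi|^{-1/2}$ is accurate); the paper's route buys uniformity in $w$ with no case split, at the price of a second integration by parts and a constant depending on $\phi_0$ through $|\sin\phi|^{1/2}|\cot\phi|=|\cos\phi|\,|\sin\phi|^{-1/2}$. Your individual estimates all check out, including the sign identification $|\sin\phi|^{-1/2}\sin\phi=\epsilon_\phi|\sin\phi|^{1/2}$ with $\epsilon_\phi=\sgn(\sin\phi)$, which agrees with the lemma's convention on the admissible range of $\phi$, and the trivial bounds $|\chi_\phi^{(0)}(w)|\le 1/(\pi w)$ and $1\le 4w^{-2}$, $w^{-1}\le 2w^{-2}$ for $w<2$.
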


\begin{proof}
We have
\begin{equation}
\begin{split}
\chi_\phi(w) = & |\sin\phi|^{-1/2} 
\int_0^1 e\left[\frac{\tfrac12(w^2+w'^2)\cos\phi-
w w'}{\sin\phi}\right]\,dw' \\
= & - \frac{\epsilon_\phi |\sin\phi|^{1/2} }{2\pi\i w}  \bigg\{ 
e\left[\frac{\tfrac12(w^2+w'^2)\cos\phi-
w w'}{\sin\phi}\right] \bigg|_{w'=0}^1 \\
&  -
2\pi\i\cot\phi \int_0^1 e\left[\frac{\tfrac12(w^2+w'^2)\cos\phi-
w w'}{\sin\phi}\right]\,w'\,dw' \bigg\} .
\end{split}
\end{equation}
Using again integration by parts shows that the above integral is $O(w^{-1})$:
\begin{equation}
\begin{split}
\bigg| \int_0^1  & e\left[\frac{\tfrac12(w^2+w'^2)\cos\phi-
w w'}{\sin\phi}\right]\,w'\,dw' \bigg|\\
= & \frac{|\sin\phi|}{2\pi |w|} \bigg| e\left[\frac{\tfrac12(w^2+w'^2)\cos\phi-
w w'}{\sin\phi}\right]\,w'\bigg|_{w'=0}^1 \\
& -
\int_0^1 e\left[\frac{\tfrac12(w^2+w'^2)\cos\phi-
w w'}{\sin\phi}\right]\,(1+2\pi\i  w'^2\cot\phi)\,dw' \bigg| \\
\leq & \frac{1}{2\pi |w|} \bigg| 1+  \int_0^1(1+2\pi  w'^2)\,dw' \bigg| =O(w^{-1}) .
\end{split}
\end{equation}
\end{proof}

\begin{lem}\label{lem99}
Given $\phi_0,\eta>0$ there is a constant $\widetilde C>0$ such that for all $u\in\RR$, $v\in\RR_{>0}$, $\phi\notin [-\phi_0,\phi_0]+\pi\ZZ$, $y\notin [-\eta,\eta]+\ZZ$,
\begin{multline}\label{appro}
\int_\TT \bigg| \Theta_\chi\bigg(u+\i v ,\phi;\begin{pmatrix} x \\ y \end{pmatrix}\bigg) \\
- v^{1/4} \sum_{n\in\ZZ} \chi_\phi^{(0)}\big((n-y) v^{1/2}\big) \; e\big(\tfrac12(n-y)^2 u + n x\big) \bigg|^2 dx
\leq \widetilde C v^{-3/2} .
\end{multline}
\end{lem}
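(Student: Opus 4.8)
The plan is to recognise the integrand as the modulus squared of a single Fourier series in $x$ and to evaluate the integral by Parseval's identity. Writing $w_n=(n-y)v^{1/2}$ and
\[
g_n=\chi_\phi(w_n)-\chi_\phi^{(0)}(w_n),
\]
the two sums appearing in \eqref{appro} differ only in their coefficient function, so the quantity inside the integral equals
\[
v^{1/4}\sum_{n\in\ZZ} g_n\, e\big(\tfrac12(n-y)^2 u + nx\big)
= v^{1/4}\sum_{n\in\ZZ}\big(g_n\, e(\tfrac12(n-y)^2 u)\big)\, e(nx).
\]
Since $u\in\RR$, the phase $e(\tfrac12(n-y)^2 u)$ is unimodular, and (as the bound below will confirm) the coefficient sequence lies in $\ell^2(\ZZ)$, so Parseval's identity on $\TT$ gives
\[
\int_\TT\big|\,\cdots\,\big|^2\,dx = v^{1/2}\sum_{n\in\ZZ}|g_n|^2 .
\]

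First I would bound each $g_n$ using Lemma \ref{lem:chi}. Although that lemma is stated for $w\in\RR_{>0}$, its integration-by-parts argument applies verbatim for $w<0$ once $w^{-2}$ is read as $|w|^{-2}$, so for all $w\neq 0$ and all $\phi\notin[-\phi_0,\phi_0]+\pi\ZZ$ one has $|\chi_\phi(w)-\chi_\phi^{(0)}(w)|\le C\,|w|^{-2}$ with $C=C(\phi_0)$. The hypothesis $y\notin[-\eta,\eta]+\ZZ$ says the distance from $y$ to the nearest integer exceeds $\eta$, hence $|n-y|>\eta$ for every $n\in\ZZ$, so $|w_n|=|n-y|v^{1/2}>0$ and the bound applies to every term:
\[
|g_n|^2\le C^2|w_n|^{-4}=C^2|n-y|^{-4}v^{-2}.
\]
This is $O(|n-y|^{-4})$, which both confirms the $\ell^2$-summability invoked above and yields
\[
v^{1/2}\sum_{n\in\ZZ}|g_n|^2\le C^2 v^{-3/2}\sum_{n\in\ZZ}|n-y|^{-4}.
\]

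Finally I would observe that $\sum_{n\in\ZZ}|n-y|^{-4}$ is bounded by a constant depending only on $\eta$: the series is invariant under $y\mapsto y+1$, so we may assume $\eta<y<1-\eta$, whereupon the terms $n=0,1$ contribute at most $2\eta^{-4}$ and the remaining terms are dominated by $2\sum_{k\ge 1}k^{-4}=2\zeta(4)$. Writing $C_\eta$ for this bound gives the lemma with $\widetilde C=C(\phi_0)^2\,C_\eta$. I do not anticipate a serious obstacle: the only two points requiring care are the extension of Lemma \ref{lem:chi} to negative arguments (immediate from its proof) and the uniformity in $y$ of the tail sum. These are handled precisely by the two hypotheses $\phi\notin[-\phi_0,\phi_0]+\pi\ZZ$ and $y\notin[-\eta,\eta]+\ZZ$, which keep both the constant $C$ and the series $\sum_n|n-y|^{-4}$ uniform, and which is exactly why those conditions are imposed.
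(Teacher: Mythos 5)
Your proof is correct and follows exactly the paper's argument: Parseval's identity reduces the integral to $v^{1/2}\sum_{n}|\chi_\phi(w_n)-\chi_\phi^{(0)}(w_n)|^2$, which Lemma \ref{lem:chi} bounds by $C^2 v^{-3/2}\sum_n |n-y|^{-4}$, and the condition $y\notin[-\eta,\eta]+\ZZ$ keeps that last sum uniformly bounded. Your two added remarks --- that Lemma \ref{lem:chi} extends to negative $w$ and that the tail sum is uniform in $y$ --- are details the paper leaves implicit, and you handle them correctly.
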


\begin{proof}
Parseval's identity shows that the left hand side of \eqref{appro} equals
\begin{equation}
v^{1/2} \sum_{n\in\ZZ} \big|  \chi_\phi((n-y) v^{1/2}) - \chi_\phi^{(0)}((n-y) v^{1/2}) \big|^2
\end{equation}
which, by Lemma \ref{lem:chi}, is less or equal to $C^2 v^{-3/2} \sum_{n\in\ZZ} (n-y)^{-4}$.
\end{proof}

\vspace{10pt}\refstepcounter{sectionno}\S \arabic{sectionno}.
We conclude by outlining the remaining steps in the proof of Theorem \ref{thm500}. Lemma \ref{lem1} shows that $\widetilde X_{N,\alpha}$ has the same distribution as the random variable
\begin{equation}\label{rv25}
e(t) \bigg(\frac{N}{c_N(\ell\alpha)}\bigg)^{1/2} \bigg| \Theta_\chi\bigg(\ell\alpha+\i N^{-2},0 ; \begin{pmatrix} x \\ y \end{pmatrix} \bigg)  \bigg|,
\end{equation}
where $(t,x,y)$ is uniformly distributed in $\TT^3$. In view of Theorem \ref{thm1}, the random variable \eqref{rv25} has, for $N=N_i$, the same distribution as 
\begin{equation}\label{rv26}
e(t) \frac{v_i^{1/4}}{|\sin\phi_i|^{1/2}} \bigg| \Theta_\chi\bigg(u_i+\i v_i,\phi_i ; \begin{pmatrix} x \\ y \end{pmatrix} \bigg)  \bigg|,
\end{equation}
with $u=\ell\alpha$ and $u_i,v_i,\phi_i$ as in Lemma \ref{lem77}. 
Lemma \ref{lem99} implies via Chebyshev's inequality that the distribution of \eqref{rv26} is arbitrarily close (as $v_i\to\infty$ with $\sin\phi_i$ bounded away from zero) to the distribution of 
\begin{multline}
e(t)  \frac{v_i^{1/2}}{|\sin\phi_i|^{1/2}}\bigg| \sum_{n\in\ZZ} \chi_{\phi_i}^{(0)}\big((n-y) v_i^{1/2}\big) \; e\big(\tfrac12(n-y)^2 u_i + n x\big) \bigg|  \\
=
e(t) \bigg| \sum_{n\in\ZZ} \frac{1-e(\tfrac12 \cot\phi_i-\frac{(n-y) v_i^{1/2}}{\sin\phi_i})}{2\pi\i (n-y)}  \; e\big(\tfrac12(n-y)^2 (u_i+v_i \cot\phi_i) + n x\big) \bigg| ,
\end{multline}
which in turn is arbitrarily close to the distribution of
\begin{equation}
e(t) \bigg| \sum_{n\in\ZZ} \frac{1-e(t'-\frac{n v_i^{1/2}}{\sin\phi_i})}{2\pi\i (n-y)}  \; e\big(\tfrac12(n-y)^2 (u_i+v_i \cot\phi_i) + n x\big) \bigg| ,
\end{equation}
with $t'$ uniformly distributed on $\TT$. This yields Theorem \ref{thm500}. \qed

In the particular case $u=0$ the limit distribution in Theorem \ref{thm500} is given by the random variable
\begin{equation}\label{Y00}
Y_{0,0}= e(t)\bigg| \frac{1+e(t')}{2\pi}  \sum_{n\in\ZZ} \frac{e(nx)}{n-y} \bigg| .
\end{equation}
It is a nice exercise (i) to show that the distribution of $Y_{0,0}$ remains the same if we condition $x$ to be any fixed non-integer value, and (ii) to prove directly that $Y_{0,0}$ indeed has the same distribution as the random variable \eqref{firstchoice} discussed in \S\ref{final0}. [Hint: Choose $x=\frac12$ in \eqref{Y00}.]

\end{document}